\newtheorem{theorem}{Theorem}[section]
\newtheorem{lema}[theorem]{Lemma}
\newtheorem{question-non}[]{}
\newtheorem{cor}[theorem]{Corollary}
\newtheorem{observation}[theorem]{Remark}
\newtheorem{example}[theorem]{Example}
\title{Immersion of gradient almost Yamabe solitons into warped product manifolds}
\author{Tokura, W. $^{1}$}
\address{$^{1}$ Instituto Federal Goiano, 75380-000, Av. Wilton Monteiro da Rocha, s/n, Trindade, GO, Brazil.}
\email{williamisaotokura@hotmail.com $^{1}$}
\author{Adriano, L. $^{2}$}
\address{$^{2}$ Universidade Federal de Goi\'as, IME, 131, 74001-970, Goi\^ania, GO, Brazil.}
\email{levi@ufg.br $^{2}$}
\author{Batista, E. $^{3}$}
\address{$^{3}$ Universidade Federal de Goi\'as, IME, 131, 74001-970, Goi\^ania, GO, Brazil.}
\email{edbatista@gmail.com.br $^{3}$}
\author{Bezerra, A. C. $^{4}$}
\address{$^{4}$ Instituto Federal Goiano, 75380-000, Av. Wilton Monteiro da Rocha, s/n, Trindade, GO, Brazil.}
\email{adriano.bezerra@ifgoiano.edu.br $^{4}$}
\thanks{$^{1,3}$ Supported by CAPES}
\keywords{Yamabe solitons, gradient almost Yamabe solitons, immersion, totally geodesic hypersurfaces, totally umbilical hypersurfaces, warped product,  rotational classification.}
\subjclass[2010]{53C21, 53C50, 53C25} 
\begin{document}

\begin{abstract}The purpose of this article is to study the geometry of gradient almost Yamabe solitons immersed into warped product manifolds $I\times_{f}M^{n}$ whose potential is given by the height function from the immersion. First, we present some geometric rigidity on compact solitons due to a curvature condition on the warped product manifold. In the sequel, we investigate conditions for the existence of totally geodesic, totally umbilical and minimal solitons. Furthermore, in the scope of constant angle immersions, a classification of rotational gradient almost Yamabe soliton immersed into $\mathbb{R}\times_{f}\mathbb{R}^{n}$ is also made.
\end{abstract}
\maketitle
\section{Introduction}
\label{intro}

The concept of gradient almost Yamabe soliton, introduced in the celebrated work \cite{barbosa2013conformal}, corresponds to a natural generalization of gradient Yamabe solitons \cite{hamilton1988ricci} and Yamabe metrics \cite{yamabe1960deformation}. We recall that a Riemannian manifold $(\Sigma^n, g)$ is an almost Yamabe soliton if it admits a vector field $X\in \mathfrak{X}(\Sigma)$ and a smooth function $ \lambda: \Sigma \rightarrow \mathbb{R}$ satisfying the equation 
\begin{equation}\label{def1}
    \frac{1}{2}\mathfrak{L}_{X}g=(scal_{g}-\lambda)g,
\end{equation}
where $\mathfrak{L}_{X}g$ and $scal_{g}$ stand, respectively, for the Lie derivative of $g$ in the direction of $X$ and the scalar curvature of $g$. The quadruple $(\Sigma^{n}, g, X, \lambda)$ is classified into three types according to the sign of
$\lambda$: expanding if $\lambda < 0$, steady if $\lambda = 0$ and shrinking if $\lambda > 0$. If $\lambda$ occurs as a constant, the soliton is usually referred to as an \textit{Almost Yamabe soliton}.
It may happen that $X=\nabla h$ is the gradient field of a smooth real function $h$ on $M$, called \textit{potential}, in which case the soliton $(\Sigma^{n}, g, h, \lambda)$ is referred to as a \textit{gradient almost Yamabe soliton}. Equation \eqref{def1} then becomes
\begin{equation}\label{def2} 
Hess_{g}h=(scal_{g}-\lambda)g,
\end{equation}
where $Hess_{g}h$ is the Hessian of $h$. We pointed out that, if $h$ is constant, the soliton is called \textit{trivial}.

Almost Yamabe solitons are special solutions of Hamilton’s Yamabe flow \cite{hamilton1988ricci} and can be viewed as stationary points of the Yamabe flow in the space of Riemannian metrics on $\Sigma^n$ modulo diffeomorphisms and scalings of $\Sigma^n$. In this way, the study of the analytical and geometrical properties of almost Yamabe solitons becomes essential for understanding the behavior of the Yamabe flow.



Recently, much efforts have been devoted to understanding the geometry of almost Yamabe solitons and almost Ricci solitons, both isometrically immersed into space forms \cite{aquino2017characterizations, barros2011immersion,chen2014classification, chen2018yamabe, cunha2018r, seko2019classification}. Chen and Deshmukh \cite{chen2018yamabe}, study Yamabe solitons whose soliton field is the tangent component from the position vector on Euclidean space and, as a result, they given some rigidity results.  Under a concurrent vector field assumption on the soliton field, Seko and Maeta \cite{seko2019classification} showed that any almost Yamabe soliton has a gradient almost Yamabe soliton structure. Furthermore, for almost Yamabe solitons on ambient spaces furnished with a concurrent vector field, the authors give a classification of such solitons. On the other hand, Aquino et al. \cite{aquino2017characterizations} present the study of gradient almost Ricci solitons immersed into the space of constant sectional curvature $M^{n+1}\subset \mathbb{R}^{n+2}$ with potential given by height function from the soliton associated to a fixed direction on $\mathbb{R}^{n+2}$.

The above works bring us to light that important geometric results are obtained if we choose some appropriate soliton vector field. In this sense, a vector field that already proven themselves to be a rich source for produce examples of solitons fields is the one generated by the gradient of the height function from the immersion. Examples of solitons in which the height function is taken as the potential function are given in \cite{aquino2017characterizations,  barbosa2013conformal,barros2011immersion, barros2014characterizations, chen2014classification,cunha2018r, tokura2018warped}.

From the previous works, gradient almost Yamabe solitons in which height function is chosen as the potential might be interesting for further investigation. Moreover, to extend the above works to a larger class of ambient spaces, it appears convenient to consider the immersions into a sufficiently large family of manifolds, which includes the spaces of constant sectional curvature. A natural metric, which includes in its range the spaces of constant sectional curvature, is described by warped product metrics \cite{o1983semi}. Warped product manifolds have already proven themselves to be a profitable ambient space to obtain a wide range of distinct geometrical proprieties for immersions (cf. \cite{alias2007constant, caminha2009complete, colares2012some, de2019characterizations, dillen2011classification}). In this context, as in \cite{alias2007constant}, we can extend the concept of height function from the immersion by means of the projection onto the base of the warped product (see Section \ref{Pre}).



The purpose of this manuscript is to study the geometry of gradient almost Yamabe solitons $(\Sigma^{n}, g, h, \lambda)$ immersed into warped product manifold $I\times_{f}M^{n}$ whose potential $h$ is given by the height function from the immersion. In this setting, we derive a necessary and sufficient condition for the immersion be a gradient almost Yamabe soliton. We use this result to investigate conditions for the existence of totally geodesic, totally umbilical, minimal and trivial solitons. Furthermore, when the ambient space is taking as $\mathbb{R}\times_{f}\mathbb{R}^n$, we provide the classification of rotational gradient almost Yamabe soliton with a constant angle.

This manuscript is organized in the following way: In Section \ref{Pre}, we recall some basic facts and notations that will appear throughout the paper. Afterward, in Section \ref{Eexamples}, we exhibit some examples of immersions satisfying the gradient almost Yamabe soliton equation \eqref{def2}, and we establish our first main results concerning the geometry of these geometric objects. Finally, in Section \ref{classification}, we provide the classification of rotationally symmetric gradient almost Yamabe solitons.
\section{Preliminaries}
\label{Pre}

Let $M^{n}$ be a connected, $n$-dimensional oriented Riemannian manifold, $I\subset\mathbb{R}$ an
interval and $f : I \rightarrow (0,\infty)$ a smooth function. In the product differentiable
manifold $\overline{M}^{n+1}= I \times M^{n}$, consider the projections $\pi_{I}$ and $\pi_{M}$ onto the spaces $I$ and $M$, respectively. A particular class of Riemannian manifold is the one obtained by furnishing $\overline{M}^{n+1}$ with
the metric 
\[\langle \ ,\ \rangle=\pi_{I}^{\ast}(dt^2)+f^2(\pi_{I})\pi_{M}^{\ast}(g_{M}),\] 
such a space is called a warped product manifold with base $I$, fiber $M^n$ and warping function $f$. In this setting, for a fixed $t_{0}\in\mathbb{R}$, we say that $\Sigma_{t_{0}}^{n}:=\{t_{0}\}\times M^{n}$ is a slice of $\overline{M}^{n+1}$. 

Let $\overline{\nabla}$ and $\nabla$ the Levi-Civita connection in $I\times_{f}M^{n}$ and $\Sigma^n$, respectively. Then, the Gauss-Weingarten formulas for a isometric immersion $\psi:\Sigma^{n}\rightarrow I\times_{f}M^{n}$ are given by
\begin{equation}\label{eq1}\overline{\nabla}_{X}Y=\nabla_{X}Y+\langle AX,Y\rangle N,\qquad AX=-\overline{\nabla}_{X}N,
\end{equation}
for any $X\in \mathfrak{X}(\Sigma)$, where $A:T\Sigma^n\rightarrow T\Sigma^n$ denotes the Weingarten operator of $\Sigma^n$ with respect to its Gauss map $N$. In this scope, we consider two particular functions naturally attached to such a hypersurface $\Sigma^n$, namely, the height function $h:=(\pi_{I})|_{\Sigma}$ and the angle function $\theta=\langle N,\partial_{t}\rangle$, where $\partial_{t}$ is the standard unit vector field tangent to $I$. By a straightforward computation we obtain that the gradient of $\pi_{I}$ on $I\times_{f}M^{n}$ is given by
\[\overline{\nabla}\pi_{I}=\langle \overline{\nabla}\pi_{I}, \partial_{t}\rangle \partial_{t}=\partial_{t},\]
so that the gradient of $h$ on $\Sigma^{n}$ is
\begin{equation}\label{eq2}\nabla h=(\overline{\nabla}\pi_{I})^{\top}=\partial_{t}^{\top}=\partial_{t}-\theta N,
\end{equation}
where $(\hspace{0,1cm}\cdot\hspace{0,1cm})^{\top}$ denotes the tangential component of a vector field
in $\mathfrak{X}(\overline{M})$ along $\Sigma^{n}$. In particular, we get
\[|\nabla h|^{2}=1-\theta^2,\]
where $|\cdot|$ denotes the norm of a vector field on $\Sigma^{n}$. 

Let $\overline{R}$ and $R$ be the curvature tensors of $I\times_{f}M^n$ and $\Sigma^n$, respectively. Therefore, for any $X$, $Y$, $Z\in \mathfrak{X}(\Sigma)$ we have the following Gauss equation:
\begin{equation}\label{eq3}
R(X,Y)Z=(\overline{R}(X,Y)Z)^{\top}+ \langle AX,Z\rangle AY- \langle AY, Z\rangle AX.
\end{equation}

Denote by $Ric$ the ricci tensor of $\Sigma^n$ and consider a local orthonormal frame $\{E_{i}\}_{i=1}^n$ of $\mathfrak{X}(\Sigma)$, as well as $X\in\mathfrak{X}(\Sigma)$. Then, it follows from the Gauss equation \eqref{eq3} that
\begin{equation}\label{Ric}
Ric(X,X)=\sum_{i=1}^{n}\langle\overline{R}(X,E_{i})X, E_{i}\rangle+nH\langle AX,X\rangle-\langle AX,AX\rangle.
\end{equation}
Moreover, taking into account the properties of the Riemannian tensor $\overline{R}$ of a warped product (see for instance Proposition 7.42 in \cite{o1983semi}), we deduce
\begin{align*}
\overline{R}(X,Y)Z=R^{M}(X^{\ast}, Y^{\ast})Z^{\ast}&-[(\log f)'(h)]^{2}\left[\langle X,Z\rangle Y-\langle Y,Z\rangle X\right]\\
&+(\log f)''(h)\langle Z,\partial_{t}\rangle\left[\langle Y,\partial_{t}\rangle X-\langle X,\partial_{t}\rangle Y\right]\\
&-(\log f)''(h)\left[\langle Y,\partial_{t}\rangle\langle X,Z\rangle-\langle X,\partial_{t}\rangle\langle Y,Z\rangle\right]\partial_{t},
\end{align*}
where $R^{M}$ is the curvature tensor of the fiber and $X^{\ast}=X-\langle X,\partial_{t}\rangle \partial_{t}$, $E_{i}^{\ast}=E_{i}-\langle E_{i},\partial_{t}\rangle \partial_{t}$ are, respectively, the projections of the tangent vector fields $X$ and $E_{i}$ onto $M^{n}$. Thus, we obtain that
\begin{equation}\label{ricc2}
\begin{split}
\sum_{i=1}^{n}\langle\overline{R}(X,E_{i})X,E_{i}\rangle=&f(h)^{-2}\sum_{i=1}^{n}K^{M}(X^{\ast}, E_{i}^{\ast})\Big{[}|X|^{2}-\langle X,\nabla h\rangle ^{2}-|X|^{2}\langle \nabla h, E_{i}\rangle ^{2}\\&-\langle X,E_{i}\rangle ^{2}+ 2\langle X, \nabla h\rangle\langle X, E_{i}\rangle\langle\nabla h, E_{i}\rangle\Big{]}+[(\log f)'(h)]^{2}\Big{(}|\nabla h|^{2}\\
&-(n-1)\Big{)}|X|^{2}-(n-2)(\log f)''(h)\langle X,\nabla h\rangle ^{2}-\frac{f''}{f}|\nabla h|^{2}|X|^{2},
\end{split}
\end{equation}
where $K^{M}$ is the sectional curvature of $M^{n}$, and hence from \eqref{Ric}, the scalar curvature of $\Sigma^n$ takes the following form
\begin{equation}\label{eq4}
\begin{split}
    scal_{g}=f(h)^{-2}&\sum_{i,j=1}^{n}K^{M}(E_{j}^{\ast}, E_{i}^{\ast})\Big{[}1-\langle E_{j},\nabla h\rangle ^{2}-\langle \nabla h, E_{i}\rangle ^{2}-\langle E_{j},E_{i}\rangle ^{2}\\
&+ 2\langle E_{j}, \nabla h\rangle\langle E_{j}, E_{i}\rangle\langle\nabla h, E_{i}\rangle\Big{]}+n[(\log f)'(h)]^{2}\left(|\nabla h|^{2}-(n-1)\right)\\
&-(n-2)(\log f)''(h)|\nabla h|^{2}-n\frac{f''}{f}|\nabla h|^{2}+n^{2}H^{2}-|A|^{2}.
\end{split}
\end{equation}


From \cite{o1983semi}, we know that $\overline{M}^{n+1}$ has constant sectional curvature $c$ if, and only if, $M^n$ has constant sectional curvature $k$ and the warping function $f$ satisfy the following ODE:
\begin{equation}\label{ssaa}
    \frac{(f')^2-k}{f^2}=-c=\frac{f''}{f}.
\end{equation}
We remark that a Riemannian manifold of constant sectional curvature $c\in \{-1,0,1\}$ can be expressed as a warped product manifold $I\times_{f}M^{n}$, namely
  \begin{alignat*}{3}
    &\mathbb{R}^{n+1}\setminus\{0\}=(0,+\infty)\times_{f}\mathbb{S}^n  && \hspace{4cm}\textup{with}\hspace{0.1cm}f(t)=t,\\
    &\mathbb{R}^{n+1}=\mathbb{R}\times_{f}\mathbb{R}^{n}   &&
    \hspace{4cm}
    \textup{with}\hspace{0.1cm}f(t)=1,\\
     &\mathbb{S}^{n+1}\setminus\{\pm p\}=(0,\pi)\times_{f}\mathbb{S}^{n}   &&
    \hspace{4cm}
    \textup{with}\hspace{0.1cm}f(t)=\sin t,\\
    &\mathbb{H}^{n+1}=\mathbb{R}\times_{f}\mathbb{R}^{n}     &&
    \hspace{4cm}
    \textup{with}\hspace{0.1cm}f(t)=e^t,\\
    &\mathbb{H}^{n+1}\setminus\{p\}=(0,+\infty)\times_{f}\mathbb{S}^{n}       &&
    \hspace{4cm}
    \textup{with}\hspace{0.1cm}f(t)=\sinh t.
  \end{alignat*}
After a straightforward calculation, we easily see that these warped product models trivially verify \eqref{ssaa}.

Proceeding, in order to establish our main results, we will need the following key lemma, which provides a necessary and sufficient condition to a hypersurface be a gradient almost Yamabe soliton with height function as the potential. 


\begin{lema}\label{prop}Let $\psi:\Sigma^{n}\rightarrow I\times_{f}M^{n}$ be an isometric immersion. Then $(\Sigma^{n},g)$ is a gradient almost Yamabe soliton with potential $h=(\pi_{I})|_{\Sigma}$ if, and only if, 
\begin{equation}\label{trace}
    (scal_{g}-\lambda)g(X,Y)=(\log f)'(h)\left[g(X,Y)-dh\otimes dh(X,Y)\right]+\theta g(AX,Y)
\end{equation}
for all $X$, $Y\in\mathfrak{X}(\Sigma)$.
\end{lema}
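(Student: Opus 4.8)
The plan is to reduce the whole statement to the single computation of the intrinsic Hessian $Hess_{g}h$ of the height function. Indeed, by the defining equation \eqref{def2}, the quadruple $(\Sigma^{n},g,h,\lambda)$ is a gradient almost Yamabe soliton with potential $h$ exactly when $Hess_{g}h=(scal_{g}-\lambda)g$. Once I establish the explicit identity $Hess_{g}h(X,Y)=(\log f)'(h)[g(X,Y)-dh\otimes dh(X,Y)]+\theta\,g(AX,Y)$, the asserted equivalence \eqref{trace} is immediate in both directions: its right-hand side \emph{is} $Hess_{g}h(X,Y)$, so \eqref{trace} is just the soliton equation with $\lambda$ determined by $(scal_{g}-\lambda)g=Hess_{g}h$. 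Thus there is nothing to prove beyond the Hessian identity.

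To compute it, I would start from \eqref{eq2}, which gives $\nabla h=\partial_{t}-\theta N$, and use that for tangent fields $X,Y\in\mathfrak{X}(\Sigma)$ the intrinsic covariant derivative is the tangential part of the ambient one, so that by the Gauss formula \eqref{eq1} the normal contribution is orthogonal to $Y$ and $Hess_{g}h(X,Y)=\langle\nabla_{X}\nabla h,Y\rangle=\langle\overline{\nabla}_{X}\nabla h,Y\rangle$. Expanding $\overline{\nabla}_{X}(\partial_{t}-\theta N)=\overline{\nabla}_{X}\partial_{t}-X(\theta)N-\theta\,\overline{\nabla}_{X}N$ and pairing with $Y$, the term $X(\theta)N$ drops out since $\langle N,Y\rangle=0$, while the Weingarten relation $AX=-\overline{\nabla}_{X}N$ from \eqref{eq1} turns the last term into $+\theta\,\langle AX,Y\rangle$. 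This already yields the second summand of \eqref{trace}.

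The crux is the term $\langle\overline{\nabla}_{X}\partial_{t},Y\rangle$, for which I would invoke the standard description of the Levi-Civita connection of a warped product (O'Neill, Proposition 7.35 in \cite{o1983semi}). Decomposing $X$ into its component along the unit base field $\partial_{t}$ and its fiber component $X^{\ast}=X-\langle X,\partial_{t}\rangle\partial_{t}$, one has $\overline{\nabla}_{\partial_{t}}\partial_{t}=0$ (the base $I$ is geodesic) and $\overline{\nabla}_{X^{\ast}}\partial_{t}=(\log f)'(h)\,X^{\ast}$, whence $\overline{\nabla}_{X}\partial_{t}=(\log f)'(h)\big(X-\langle X,\partial_{t}\rangle\partial_{t}\big)$. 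Pairing with $Y$ and replacing ambient inner products by intrinsic ones through $\langle X,\partial_{t}\rangle=\langle X,\nabla h\rangle=dh(X)$ and $\langle\partial_{t},Y\rangle=dh(Y)$ — valid because $X,Y$ are tangent and $\partial_{t}=\nabla h+\theta N$ — gives $\langle\overline{\nabla}_{X}\partial_{t},Y\rangle=(\log f)'(h)\big[g(X,Y)-dh\otimes dh(X,Y)\big]$, the first summand of \eqref{trace}.

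Collecting the two contributions produces the Hessian identity above, and substituting it into $Hess_{g}h=(scal_{g}-\lambda)g$ is exactly \eqref{trace}. The only delicate point is getting the warped-product connection right, in particular that $\partial_{t}$ is geodesic and that the logarithmic derivative enters through $\overline{\nabla}_{V}\partial_{t}=(f'/f)V$ on fiber directions; everything else is bookkeeping between the ambient and the induced metric via $\partial_{t}=\nabla h+\theta N$.
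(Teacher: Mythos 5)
Your proposal is correct and follows essentially the same route as the paper: both reduce the lemma to the single Hessian identity, compute $\overline{\nabla}_{X}\partial_{t}=\frac{f'}{f}\left(X-\langle X,\partial_{t}\rangle\partial_{t}\right)$ via O'Neill's Proposition 7.35, and use $\nabla h=\partial_{t}-\theta N$ together with the Weingarten relation to produce the term $\theta\,g(AX,Y)$. The only cosmetic difference is that you work with the scalar quantity $\langle\overline{\nabla}_{X}\nabla h,Y\rangle$ and discard the normal terms by pairing with tangent $Y$, whereas the paper writes the Hessian operator $\nabla_{X}\nabla h$ directly; the content is identical.
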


\begin{proof}Taking into account the properties of the Levi-Civita connection of a warped product (see, for instance, Proposition 7.35 in \cite{o1983semi}), it easily follows that
\[\overline{\nabla}_{X}\partial_{t}=\frac{f'}{f}(X-\langle X,\partial_{t}\rangle\partial_{t}),\qquad \forall X\in \mathfrak{X}(\Sigma).\]
Thus, from equations \eqref{eq1} and \eqref{eq2}, we deduce the following expression for the Hessian of $h$
\begin{equation*}
Hess(h)(X)=\nabla_{X}\nabla h=\frac{f'(h)}{f(h)}\left(X-\langle X,\nabla h\rangle \nabla h\right)+ \langle N, \partial_{t} \rangle AX,
\end{equation*}
therefore, 
\begin{equation*}
Hessh(X,Y)=g(\nabla_{X}\nabla h,Y)=\frac{f'(h)}{f(h)}\left[g(X,Y)-dh\otimes dh(X,Y)\right]+\theta g(AX,Y).
\end{equation*}
The result follows by the fundamental equation \eqref{def2}.
\end{proof}




We finalize this section by quoting the generalized Hopf's maximum principle due to S.T. Yau. In the following, $L^{1}(\Sigma)$ stands for the space of the Lebesgue integrable functions on $\Sigma^n$.

\begin{lema}\label{yau}\textup{(\cite{yau1976some})} Let $(\Sigma^n,g)$ be a complete, noncompact Riemannian manifold. If $h:\Sigma\rightarrow\mathbb{R}$ is a smooth subharmonic function such that $|\nabla h|\in L^{1}(\Sigma)$, then $h$ must be actually harmonic.
\end{lema}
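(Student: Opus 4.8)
The plan is to establish this Liouville-type statement by the standard cutoff-and-integrate-by-parts technique, which is the natural tool for rigidity results of this kind on complete noncompact manifolds. Writing $\Delta h=\operatorname{div}(\nabla h)$ for the Laplacian, the subharmonicity hypothesis reads $\Delta h\geq 0$, and the goal is to promote this inequality to the identity $\Delta h\equiv 0$. Fix a base point $o\in\Sigma$ and let $r(x)=d(o,x)$ be the Riemannian distance. For each $R>0$ I would introduce a Lipschitz cutoff $\phi_R=\eta(r/R)$, where $\eta\colon[0,\infty)\to[0,1]$ is smooth with $\eta\equiv 1$ on $[0,1]$, $\eta\equiv 0$ on $[2,\infty)$, and $|\eta'|\leq 2$. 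Since $|\nabla r|=1$ almost everywhere, this produces a function with $0\leq\phi_R\leq 1$, $\phi_R\equiv 1$ on the geodesic ball $B_R$, $\operatorname{supp}\phi_R\subset B_{2R}$, and the decisive gradient bound $|\nabla\phi_R|\leq 2/R$ supported on the annulus $B_{2R}\setminus B_R$.

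The core step is an integration by parts. Because $\phi_R$ is compactly supported, the divergence theorem yields
\[
\int_\Sigma \phi_R\,\Delta h\,dV=-\int_\Sigma \langle\nabla\phi_R,\nabla h\rangle\,dV.
\]
Using $\Delta h\geq 0$ together with $\phi_R\equiv 1$ on $B_R$ and $\phi_R\geq 0$ elsewhere, the left-hand side dominates $\int_{B_R}\Delta h\,dV$. Bounding the right-hand side pointwise by Cauchy--Schwarz and invoking the gradient estimate gives, for each fixed $R$,
\[
\int_{B_R}\Delta h\,dV\leq\int_\Sigma |\nabla\phi_R|\,|\nabla h|\,dV\leq\frac{2}{R}\int_{B_{2R}\setminus B_R}|\nabla h|\,dV\leq\frac{2}{R}\int_\Sigma |\nabla h|\,dV.
\]
Here the hypothesis $|\nabla h|\in L^1(\Sigma)$ is precisely what renders the last integral finite and forces the whole bound to vanish as $R$ grows.

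Letting $R\to\infty$, the right-hand side tends to $0$, while the left-hand side increases monotonically in $R$ (since $\Delta h\geq 0$) and converges to $\int_\Sigma\Delta h\,dV$ by the monotone convergence theorem. Sandwiching a nonnegative quantity between $0$ and a null sequence yields $\int_\Sigma\Delta h\,dV=0$; as $\Delta h$ is continuous and everywhere nonnegative, it must vanish identically, so $h$ is harmonic, which is the assertion.

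I anticipate the main difficulty to lie not in the algebra but in the analytic justifications on the noncompact setting. First, the distance function $r$ fails to be smooth along the cut locus, so one must either work with its Lipschitz regularity, noting that the divergence theorem still applies because the cut locus has measure zero, or replace $r$ by a smooth exhaustion function with comparably controlled gradient. Second, the interchange of the limit with the integral must be justified, which is handled cleanly here by the monotonicity of $R\mapsto\int_{B_R}\Delta h\,dV$. Completeness of $\Sigma$ enters exactly to guarantee that the balls $B_R$ exhaust $\Sigma$ and that such cutoffs exist at every scale.
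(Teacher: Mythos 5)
Your argument is correct. Note, however, that the paper does not prove this lemma at all: it is quoted verbatim from Yau's 1976 paper as a known tool, so there is no internal proof to compare against. What you have written is the standard cutoff-and-integration-by-parts derivation of Yau's $L^1$ Liouville-type result, and it is sound: the Lipschitz cutoff $\phi_R$ with $|\nabla\phi_R|\leq 2/R$ is an admissible test function for the divergence theorem against the smooth field $\nabla h$, the chain of inequalities
\[
\int_{B_R}\Delta h\,dV\leq\frac{2}{R}\int_{B_{2R}\setminus B_R}|\nabla h|\,dV
\]
is valid, and the passage to the limit via monotone convergence of the nonnegative integrand $\Delta h$ is correctly justified. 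Two small remarks: the right-hand side in fact tends to zero already because $\int_{B_{2R}\setminus B_R}|\nabla h|\,dV$ is the tail of a convergent integral, so the $1/R$ factor is not even needed; and your caveat about the cut locus is the right one to raise --- the cleanest formulation is simply that $\phi_R$ is compactly supported and Lipschitz, hence lies in $W^{1,\infty}$ with compact support, for which the integration-by-parts identity against a smooth vector field holds with no further argument. As written, your proof is a legitimate self-contained substitute for the citation.
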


\section{Examples and main results}
\label{Eexamples}
Before present the main results, we will exhibit some examples of immersions satisfying the gradient almost Yamabe soliton equation 
\eqref{def2}.

\begin{example}
Let $(\mathbb{S}^{n},g_{1})$ the standard sphere immersed into Euclidean space $(\mathbb{R}^{n+1},g_{0})$. According to \cite{barbosa2013conformal}, if we taken the height function from the sphere given by
\[h:\mathbb{S}^n\to\mathbb{R},\quad x\mapsto g_1(x,\eta_{1}),\]
where $\eta_{1}=(1,0,\dots,0)\in\mathbb{R}^{n+1}$ and $x=(x_{1},\dots,x_{n+1})\in\mathbb{S}^{n}$ is the position vector, then $(\mathbb{S}^{n},g_{1})$ is a gradient almost Yamabe soliton with height function as the potential and soliton function given by $\lambda=\frac{1}{n}(\Delta h-scal_{g_{1}})$.
\end{example}

\begin{example}
Let $\mathbb{P}^{n}:=\{(x_{1},x_{2},x_{3},\dots,x_{n+1})\in\mathbb{R}^{n+1}\hspace{0,1cm}|\hspace{0,1cm}x_{2}=0\}$ the hyperplane isometrically imersed into Euclidean space $(\mathbb{R}^{n+1},g_{0})$. Hence, taking the height function from the hyperplane given by
\[h:\mathbb{P}^n\to\mathbb{R},\quad x\mapsto g_{0}( x,\eta_{1}),\]
where $\eta_{1}=(1,0,\dots,0)\in\mathbb{R}^{n+1}$ and $x=(x_{1},0 ,x_{3},\dots,x_{n+1})\in\mathbb{P}^{n}$, we deduce that $(\mathbb{P}^{n}, g_{0})$ is a steady gradient almost Yamabe soliton with height function as the potential function.
\end{example}

\begin{example}Consider the hyperbolic space $\mathbb{R}\times_{e^{t}}\mathbb{R}^{n}$ furnished with the warped product structure. It is
well known that the horospheres of the hyperbolic space are totally umbilical hypersurfaces isometric to $\mathbb{R}^{n}$ and correspond to slices ${\{t_{0}\}}\times \mathbb{R}^{n}$, $t_{0}\in \mathbb{R}$. Hence, taking the inclusion $i:\{t_{0}\}\times\mathbb{R}^{n}\rightarrow \mathbb{R}\times\mathbb{R}^{n}$, we deduce that the height function satisfies $h(x)=t_{0}$, and then the standard Euclidean space $\{t_{0}\}\times\mathbb{R}^{n}$ is a trivial gradient almost Yamabe soliton with potential $h(x)=t_{0}$.
\end{example}


The above example allows us to conclude, in a broad sense, that for each fixed number $t_{0}\in I$, the inclusion $i:\{t_{0}\}\times M^{n}\rightarrow I\times M^{n}$ produces a constant height function $h(x)=t_{0}$. Hence, $\{t_{0}\}\times M^{n}$ is a trivial gradient almost Yamabe soliton with potential $h(x)=t_{0}$. This observation allows us to produce infinitely many examples of gradient almost Yamabe solitons immersions, i.e.,


\begin{example}Every manifold $\Sigma^n\subset M^{n}$, isometrically included into the warped product manifold $I\times_{f} M^n$ is a trivial gradient almost Yamabe soliton with potential $h=(\pi_{I})|_{\Sigma}=const.$ and scalar curvature $scal_{g}=\lambda$.
\end{example}

The next example deals with a rotationally symmetric gradient almost Yamabe soliton with a constant angle.

\begin{example}\label{Example}
Let  $\psi:\Sigma^2=(0,\infty)\times (0,2\pi)\rightarrow \mathbb{R}\times_{e^t}\mathbb{R}^2$ be an isometric immersion given by:
$$\psi(u,v)=(u\sqrt{1-\theta^2},-\frac{\theta}{\sqrt{1-\theta^2}} e^{-u\sqrt{1-\theta^2}}\cos v,-\frac{\theta}{\sqrt{1-\theta^2}} e^{-u\sqrt{1-\theta^2}}\sin v),\quad \theta\in (0,1),$$
then, $\Sigma^2$ is a gradient almost Yamabe soliton with potential $h(u,v)=u\sqrt{1-\theta^2}$ and soliton function $\lambda=scal_{g}$ (see Section \ref{classification}).
\end{example}

\begin{figure}[ht]
\begin{center}
\advance\leftskip-3cm
\advance\rightskip-3cm
\includegraphics[keepaspectratio=true,scale=0.33]{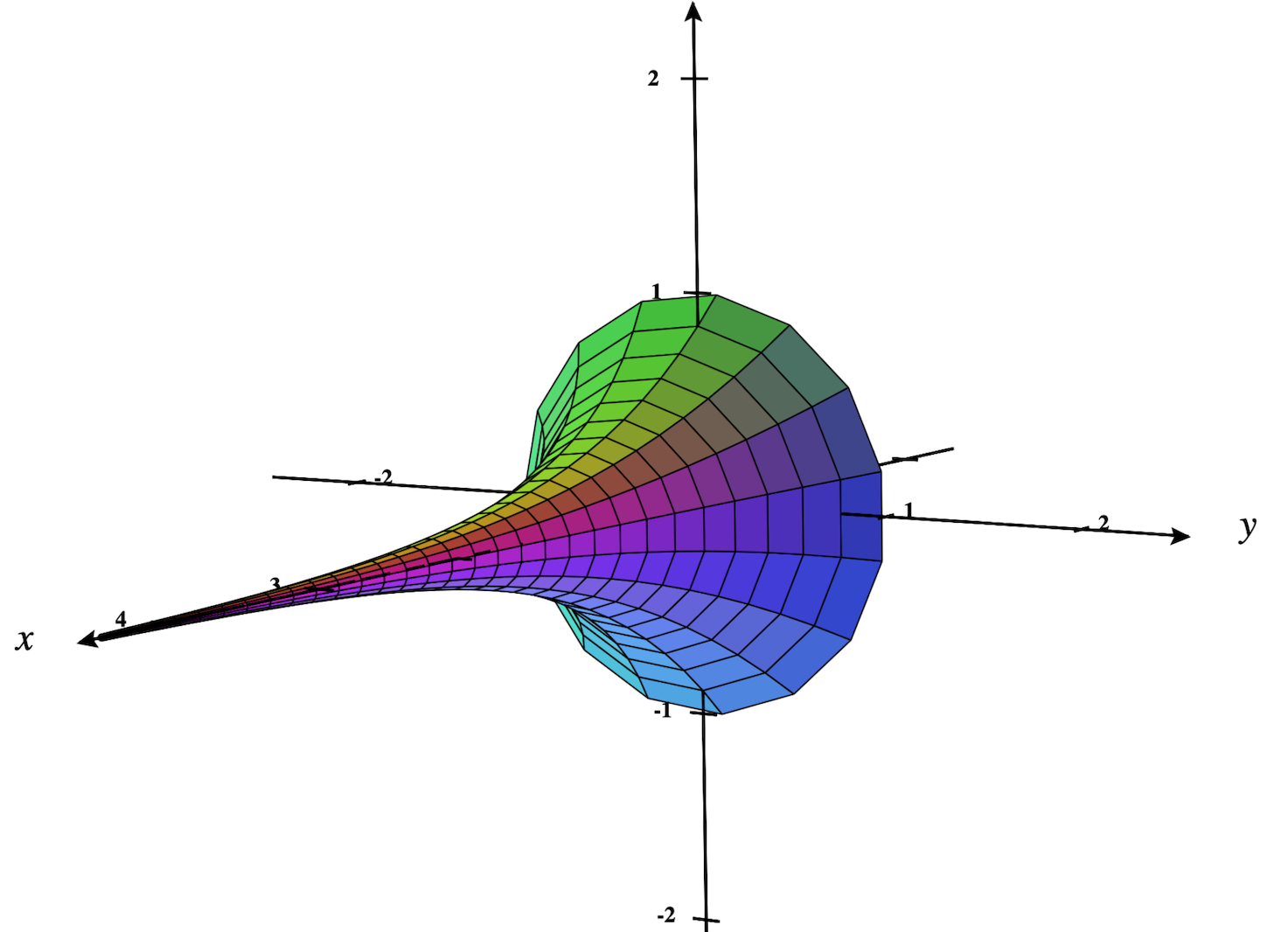}
\caption{Rotational soliton immersed into hyperbolic space with $\theta=\frac{\sqrt{2}}{2}$.}
\end{center}\end{figure}

Initially, we focus our attention on compact gradient almost Yamabe soliton immersions $\psi:\Sigma^{n}\rightarrow I\times_{f} M^{n}$. It has been known that every compact gradient Yamabe soliton
is of constant scalar curvature, hence, trivial since $h$ is
harmonic, see \cite{daskalopoulos2013classification, hsu2012note}. For gradient almost Yamabe solitons, the previous result was generalized by Barbosa et al. \cite{catino2012global}, where the authors proved that any compact gradient almost Yamabe soliton $(M^{n},g, h,\lambda)$ satisfying
\begin{equation}\label{int}
    \int_{M}g(\nabla\lambda, \nabla h)dv_{g}\geq0,
\end{equation}
is trivial. Our first result disregards \eqref{int} in favor of a hypothesis about the geometry of $I\times_{f}M^{n}$ and produces the following result.




\begin{theorem}\label{Te1}Let $(\Sigma^{n},g,h,\lambda)$ be a compact gradient almost Yamabe soliton immersed into $I\times_{f} M^{n}$ whose fiber $M^n$ has sectional curvature $k_{M}\geq\sup_{\substack{I}}((f')^{2}-ff'')$ and the warping function $f$ satisfies:
\begin{equation}\label{hyp}\frac{f''(h)}{f(h)}\leq \dfrac{n+1}{n^2}H^2,\qquad\quad 0\leq |\theta|^{-1}(\log f)'(h)\leq H,
\end{equation}
then the soltion is trivial, i.e., the potential $h$ is constant. \end{theorem}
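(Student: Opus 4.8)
The plan is to prove triviality by reducing it to $\nabla h\equiv 0$ via an integral identity that the geometric hypotheses will render incompatible with $|\nabla h|^2\not\equiv 0$. Write $\phi:=scal_{g}-\lambda$, so that the fundamental equation \eqref{def2} reads $Hess_{g}h=\phi\,g$, whence $\Delta h=n\phi$ and $|Hess_{g}h|^{2}=n\phi^{2}$. Feeding these into the Bochner formula $\tfrac12\Delta|\nabla h|^{2}=|Hess_{g}h|^{2}+g(\nabla\Delta h,\nabla h)+Ric(\nabla h,\nabla h)$ and integrating over the closed manifold $\Sigma^{n}$ (so that $\int_{\Sigma}\Delta|\nabla h|^{2}=0$ and $\int_{\Sigma}g(\nabla\Delta h,\nabla h)=-\int_{\Sigma}(\Delta h)^{2}$) yields the master identity
\[
\int_{\Sigma}Ric(\nabla h,\nabla h)\,dv_{g}=n(n-1)\int_{\Sigma}\phi^{2}\,dv_{g}.
\]
Since the right-hand side is a nonnegative multiple of $\int\phi^{2}$, the whole problem becomes one of sandwiching $Ric(\nabla h,\nabla h)$ between this quantity and an extra definite-sign term in $|\nabla h|^{2}$.

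Next I would make the Weingarten operator explicit from Lemma \ref{prop}. In operator form the lemma reads $\phi X=(\log f)'(h)\big(X-\langle X,\nabla h\rangle\nabla h\big)+\theta AX$, so on the open set where $\theta\neq0$ the shape operator is completely determined: $\nabla h$ is a principal direction and, using $|\nabla h|^{2}=1-\theta^{2}$ and the trace relation $n\phi=(\log f)'(h)\,(n-|\nabla h|^{2})+n\theta H$, the two principal curvatures are
\[
\mu_{1}=H+\frac{(n-1)(\log f)'(h)\,|\nabla h|^{2}}{n\theta},\qquad \mu_{2}=H-\frac{(\log f)'(h)\,|\nabla h|^{2}}{n\theta},
\]
with multiplicities $1$ and $n-1$. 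In particular $A\nabla h=\mu_{1}\nabla h$ and $|A|^{2}=nH^{2}+\tfrac{n-1}{n}\,\theta^{-2}[(\log f)'(h)]^{2}|\nabla h|^{4}$, so the hypotheses \eqref{hyp} are expressed exactly in the quantities $H$, $(\log f)'(h)$ and $\theta$ that control $A$.

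Then I would compute $Ric(\nabla h,\nabla h)$ pointwise from the Gauss relation \eqref{Ric} and \eqref{ricc2} in the adapted frame $E_{n}=\nabla h/|\nabla h|$. The extrinsic part collapses to $nH\langle A\nabla h,\nabla h\rangle-|A\nabla h|^{2}=(n-1)\mu_{1}\mu_{2}|\nabla h|^{2}$, and the curvature sum \eqref{ricc2} reduces (after $1-|\nabla h|^{2}=\theta^{2}$) to a fiber part $\theta^{2}|\nabla h|^{2}f(h)^{-2}\sum_{i=1}^{n-1}K^{M}(\nabla h^{\ast},E_{i}^{\ast})$ plus warping terms equal to $-(n-1)|\nabla h|^{2}\big([(\log f)'(h)]^{2}+(\log f)''(h)|\nabla h|^{2}\big)$. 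Here the curvature condition enters: $k_{M}\geq\sup_{I}((f')^{2}-ff'')$ is equivalent to $f(h)^{-2}K^{M}\geq-(\log f)''(h)$, and substituting this lower bound for the fiber part and using $\theta^{2}+|\nabla h|^{2}=1$ collapses the two groups into $-(n-1)|\nabla h|^{2}\frac{f''(h)}{f(h)}$, giving the clean pointwise estimate $Ric(\nabla h,\nabla h)\geq (n-1)|\nabla h|^{2}\big(\mu_{1}\mu_{2}-\tfrac{f''(h)}{f(h)}\big)$. Combined with the master identity this produces $\int_{\Sigma}\big[n\phi^{2}-|\nabla h|^{2}(\mu_{1}\mu_{2}-\tfrac{f''}{f})\big]\,dv_{g}\geq 0$. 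The final move is to show, using \eqref{hyp}, the reverse pointwise inequality $|\nabla h|^{2}\big(\mu_{1}\mu_{2}-\tfrac{f''}{f}\big)-n\phi^{2}\geq c_{0}|\nabla h|^{2}$ with $c_{0}\geq 0$; then the two inequalities force $\int_{\Sigma}c_{0}|\nabla h|^{2}=0$, whence $\nabla h\equiv 0$ and $h$ is constant.

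The hard part is precisely this last pointwise inequality. Substituting $\mu_{1},\mu_{2}$ and $\phi=\theta H+(\log f)'(h)\tfrac{n-|\nabla h|^{2}}{n}$ turns it into a quadratic expression in $(\log f)'(h)$ and $\theta$ with coefficients built from $H$: the bound $\frac{f''(h)}{f(h)}\leq\frac{n+1}{n^{2}}H^{2}$ is exactly what is needed to give the $H^{2}$-coefficient the correct sign after a completion of squares, while the two-sided estimate $0\leq|\theta|^{-1}(\log f)'(h)\leq H$ is what keeps the mixed $H(\log f)'$ terms and the $(\log f)'^{2}$ term under control. Two technical points require care: the sign of $\theta$, which one normalizes by orienting the Gauss map so that $\theta\geq0$ where $\nabla h\neq0$ (consistent with the $|\theta|$ in \eqref{hyp}), and the direction in which the fiber sectional-curvature term is estimated, which must match the inequality dictated by the master identity. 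Verifying that the stated hypotheses genuinely yield a nonnegative $c_{0}$ is the crux of the argument.
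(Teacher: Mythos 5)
Your ``master identity'' $\int_{\Sigma}Ric(\nabla h,\nabla h)\,dv_{g}=n(n-1)\int_{\Sigma}(scal_{g}-\lambda)^{2}dv_{g}$ is correct: it follows from Bochner exactly as you say, or even more directly from the structural equation $Ric(\nabla h)=-(n-1)\nabla(scal_{g}-\lambda)$ together with Green's identity $\int_{\Sigma}(scal_{g}-\lambda)\Delta h\,dv_{g}=-\int_{\Sigma}g(\nabla(scal_{g}-\lambda),\nabla h)\,dv_{g}$. Be aware, though, that it carries the \emph{opposite} sign to the identity \eqref{scalt} on which the paper's proof rests; the discrepancy traces to the integration by parts in \eqref{Eq1}, where $\int\phi\,\Delta h$ should equal $-\int g(\nabla\phi,\nabla h)$, not $+\int g(\nabla\phi,\nabla h)$. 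The two identities are mutually inconsistent unless the soliton is already trivial. The paper's strategy needs only the pointwise bound $Ric(\nabla h,\nabla h)\geq 0$ (which the hypotheses do deliver, by essentially your computation of $\mu_{1},\mu_{2}$ and the fiber estimate) precisely because of the minus sign in \eqref{scalt}; with the correct sign, $Ric(\nabla h,\nabla h)\geq 0$ only yields the vacuous statement $n(n-1)\int\phi^{2}\geq 0$. You have correctly diagnosed that a much stronger inequality is then required.

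That stronger inequality is exactly where your proposal breaks down, and the gap is not a mere verification you postponed: the pointwise estimate $|\nabla h|^{2}\bigl(\mu_{1}\mu_{2}-\tfrac{f''}{f}\bigr)-n\phi^{2}\geq c_{0}|\nabla h|^{2}$ with $c_{0}>0$ is false in general under \eqref{hyp}. Indeed, with $\phi=(\log f)'(h)\tfrac{n-|\nabla h|^{2}}{n}+\theta H$, letting $|\nabla h|^{2}=1-\theta^{2}\to 0$ makes the left-hand side tend to $-n\phi^{2}$ with $\phi\to(\log f)'(h)\pm H$, which is strictly negative whenever $H>0$ (the hypothesis $0\leq|\theta|^{-1}(\log f)'(h)\leq H$ then forces $\phi\to(\log f)'(h)+H\in[H,2H]$ as $\theta\to 1$); so no nonnegative $c_{0}$ can exist on the region where $|\nabla h|$ is small but nonzero. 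More structurally, the curvature hypothesis $k_{M}\geq\sup_{I}((f')^{2}-ff'')$ bounds the fiber contribution to $Ric(\nabla h,\nabla h)$ from \emph{below}, whereas your correctly signed master identity requires an \emph{upper} bound on $Ric(\nabla h,\nabla h)$ relative to $n(n-1)\phi^{2}$ in order to force $\phi\equiv 0$; the hypotheses simply point the wrong way for your scheme. As written, the proposal therefore does not close, and the ``crux'' you flag is an actual obstruction rather than a computation left to the reader.
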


\begin{proof}Using the gradient almost Yamabe soliton condition \eqref{def2}, we know that
\begin{equation*}
    \Delta h=n(scal_{g}-\lambda).
\end{equation*}
Therefore, from the Kazdan-Warner condition \cite{bourguignon1987scalar}(see Theorem II.9),
\begin{equation}\label{kazdan}
    \int_{\Sigma}g(\nabla scal_{g},\nabla h)dv_{g}=0,
\end{equation}
we obtain that
\begin{equation}\label{Eq1}
\begin{split}
\int_{\Sigma}(scal_{g}-\lambda)^{2}dv_{g}&=\frac{1}{n}\int_{\Sigma}(scal_{g}-\lambda)\Delta h dv_{g}\\
&=\frac{1}{n}\int_{\Sigma}g(\nabla scal_{g},\nabla h)dv_{g}-\frac{1}{n}\int_{\Sigma}g(\nabla\lambda,\nabla h)dv_{g}\\
&=-\frac{1}{n}\int_{\Sigma}g(\nabla\lambda,\nabla h)dv_{g}.
\end{split}
\end{equation}

Next, from the structural equations of gradient almost Yamabe solitons, presents in \cite{barbosa2013conformal}(see Lemma 2.3), we get 
\[Ric(\nabla h)+(n-1)\nabla(scal_{g}-\lambda)=0.\]
Hence,
\begin{equation}\label{Eq2}
\begin{split}
\int_{\Sigma}Ric(\nabla h,\nabla h)dv_{g}&=-(n-1)\int_{\Sigma}g(\nabla(scal_{g}-\lambda),\nabla h) dv_{g}\\
&=-(n-1)\int_{\Sigma}g(\nabla scal_{g},\nabla h)dv_{g}+(n-1)\int_{\Sigma}g(\nabla\lambda,\nabla h)dv_{g}\\
&=(n-1)\int_{\Sigma}g(\nabla\lambda,\nabla h)dv_{g},
\end{split}
\end{equation}
where in the last equality we use again the Kazdan-Warner condition \eqref{kazdan}. Combining equations \eqref{Eq1} and \eqref{Eq2}, we produce
\begin{equation}\label{scalt}
\int_{\Sigma}(scal_{g}-\lambda)^{2}dv_{g}=-\frac{1}{n(n-1)}\int_{\Sigma}Ric(\nabla h,\nabla h)dv_{g}.
\end{equation}

Now, consider a local orthonormal frame $\{E_{i}\}_{i=1}^n$ of $\mathfrak{X}(\Sigma)$. Then, it follows from the Ricci expression \eqref{Ric} that
\begin{equation}\label{riccc}
\begin{split}
Ric(\nabla h,\nabla h)&=\sum_{i=1}^{n}\langle\overline{R}(\nabla h,E_{i})\nabla h, E_{i}\rangle+nH\langle A(\nabla h),\nabla h\rangle-\langle A(\nabla h),A(\nabla h)\rangle.
\end{split}
\end{equation}
Observe that from \eqref{ricc2} we shall estimate
\begin{equation}\label{efes}
\begin{split}
\sum_{i=1}^{n}\langle\overline{R}(\nabla h,E_{i})\nabla h,E_{i}\rangle\geq&-\log f''(h)\sum_{i=1}^{n}\Big{[}|\nabla h|^{2}-|\nabla h|^{4}-|\nabla h|^{2}\langle \nabla h, E_{i}\rangle ^{2}-\langle \nabla h,E_{i}\rangle ^{2}\\&+ 2|\nabla h|^{2}\langle \nabla h, E_{i}\rangle^{2}\Big{]}+[(\log f)'(h)]^{2}\Big{(}|\nabla h|^{2}-(n-1)\Big{)}|\nabla h|^{2}\\&-(n-2)(\log f)''(h)|\nabla h|^{4}-\frac{f''(h)}{f(h)}|\nabla h|^{4}\\
\geq& -\frac{f''(h)}{f(h)}(n-1)|\nabla h|^2.
\end{split}
\end{equation}
On the other hand, from equation \eqref{trace} of Lemma \ref{prop}, we deduce that

\begin{equation}\label{qq1}
   nH\langle A(\nabla h),\nabla h\rangle= \Bigg{(}(n-1)H\frac{|\nabla h |^2}{\theta}\frac{f'(h)}{f(h)}+nH^2\Bigg{)}|\nabla h|^2,
   \end{equation}
as well as,
\begin{equation}\label{qq2}
\begin{split}\langle A\nabla h,A\nabla h\rangle&=\Bigg{(}\frac{n-1}{n}\frac{|\nabla h|^2}{\theta}\frac{f'(h)}{f(h)}+H\Bigg{)}\langle\nabla h,A\nabla h\rangle\\
&= \Bigg{(}\frac{n-1}{n}\frac{|\nabla h|^2}{\theta}\frac{f'(h)}{f(h)}+H\Bigg{)}^2|\nabla h|^2\\
    &= \left(\frac{(n-1)^2}{n^2}\frac{|\nabla h|^4}{\theta^2}\left(\frac{f'(h)}{f(h)}\right)^2+2\frac{n-1}{n}\frac{|\nabla h|^2}{\theta}\frac{f'(h)}{f(h)}H+H^2\right)|\nabla h|^2.
    \end{split}
\end{equation}
Replacing \eqref{efes}, \eqref{qq1} and \eqref{qq2} into \eqref{riccc}, we arrived at the following
\begin{equation*}
\begin{split}
    Ric(\nabla h,\nabla h)\geq\Bigg{[}-\frac{f''(h)}{f(h)}(n-1)+\frac{n-2}{n}(n-1)H\frac{|\nabla h |^2}{\theta}\frac{f'(h)}{f(h)}\hspace{2.8cm}\\
 -\frac{(n-1)^2}{n^2}\frac{|\nabla h|^4}{\theta^2}\left(\frac{f'(h)}{f(h)}\right)^2+(n-1)H^2\Bigg{]}|\nabla h|^2,
    \end{split}
\end{equation*}
and taking account hypothesis \eqref{hyp}, we obtain that
\begin{equation*}
\begin{split}
    Ric(\nabla h,\nabla h)\geq(n-1)\Bigg{[}-\frac{f''(h)}{f(h)}-\frac{n-2}{n}H^2
    -\frac{n-1}{n^2}H^2
    +H^2\Bigg{]}|\nabla h|^2\geq0.
    \end{split}
\end{equation*}
Thus, from equation \eqref{scalt} we conclude that $\Sigma^{n}$ is a trivial gradient almost Yamabe soliton.
\end{proof}


If we utilize the space forms model as warped product rather than the more commonly used model of such spaces, we produce from Theorem \ref{Te1} the following result.





\begin{cor}Let $(\Sigma^{n},g,h,\lambda)$ be a compact gradient almost Yamabe soliton immersed into $I\times_{f}M^n$. Then, the following statements hold:
\begin{itemize}[leftmargin=16pt,align=left,labelwidth=\parindent,labelsep=0pt]
\item [\textup{(a)}] \hspace{0,2cm}If $I\times_{f}M^n$ is the Euclidean sphere $(0,\pi)\times _{\sin t}\mathbb{S}^n$ and the mean curvature of $\Sigma^n$ satisfies:
\[ 0\leq |\theta|^{-1}\cot{(h)}\leq H, \]
then $\Sigma^n$ is trivial.
\item [\textup{(b)}] \hspace{0,2cm}If $I\times_{f}M^n$ is the Euclidean space $(0,+\infty)\times _{t}\mathbb{S}^n$ and the mean curvature of $\Sigma^n$ satisfies:
\[ 0\leq |\theta|^{-1}h^{-1}\leq H, \]
then $\Sigma^n$ is trivial.
\item [\textup{(c)}] \hspace{0,2cm}If $I\times_{f}M^n$ is the hyperbolic space $(0,+\infty)\times _{\sinh{t}}\mathbb{S}^n$ and the mean curvature of $\Sigma^n$ satisfies:
\[ \frac{n^2}{n+1}\leq H^2,\qquad 0\leq |\theta|^{-1}\coth{(h)}\leq H, \]
then $\Sigma^n$ is trivial.
\end{itemize}
\end{cor}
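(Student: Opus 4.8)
The plan is to read the Corollary as three instances of Theorem \ref{Te1}, obtained by plugging the explicit warping functions $f(t)=\sin t$, $f(t)=t$ and $f(t)=\sinh t$ (each over the fiber the unit sphere $\mathbb{S}^n$, so that $k_M=1$) into the hypotheses of that theorem. Thus, for each item, I would verify the fiber-curvature condition $k_M\ge\sup_I((f')^2-ff'')$ together with the two inequalities in \eqref{hyp}, and then simply invoke Theorem \ref{Te1} to conclude triviality. The whole argument is therefore a matter of checking that the stated bounds on $H$ and $h$ are precisely the conditions \eqref{hyp} rewritten for the three models.

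First I would dispose of the fiber-curvature requirement uniformly. Since each model represents a space form, the warping function obeys \eqref{ssaa}, namely $f''/f=-c$ and $((f')^2-k)/f^2=-c$ with $k=1$. From $f''=-cf$ one gets $ff''=-cf^2$, while $(f')^2=1-cf^2$; substituting yields
\[
(f')^2-ff''=(1-cf^2)+cf^2=1=k_M
\]
identically on $I$. Hence $\sup_I((f')^2-ff'')=1=k_M$ in all three cases, and the fiber-curvature hypothesis of Theorem \ref{Te1} holds (with equality). Equivalently, this can be seen by the direct computations $\cos^2 t+\sin^2 t=1$, $1-t\cdot 0=1$ and $\cosh^2 t-\sinh^2 t=1$.

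It then remains to translate the two inequalities of \eqref{hyp}. Using $(\log f)'(h)=f'(h)/f(h)$, I would record that $f'/f$ equals $\cot h$, $h^{-1}$ and $\coth h$ for the spherical, Euclidean and hyperbolic models respectively, so that the second inequality $0\le|\theta|^{-1}(\log f)'(h)\le H$ becomes exactly the stated bound on $H$ in each item. For the first inequality $f''/f\le\frac{n+1}{n^2}H^2$, I would compute $f''/f=-1$ in case (a) and $f''/f=0$ in case (b); since the right-hand side is nonnegative, the inequality is automatic in both. In case (c), however, $f''/f=\sinh h/\sinh h=1>0$, so $f''/f\le\frac{n+1}{n^2}H^2$ forces $H^2\ge\frac{n^2}{n+1}$, which is precisely the extra hypothesis imposed in (c). With both inequalities of \eqref{hyp} and the fiber-curvature condition verified, Theorem \ref{Te1} applies and $\Sigma^n$ is trivial.

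The argument presents no genuine obstacle, being a direct specialization of Theorem \ref{Te1}; the only point deserving attention is the sign of $f''/f$, which is what explains why the mean-curvature lower bound $H^2\ge n^2/(n+1)$ is needed only in the hyperbolic case.
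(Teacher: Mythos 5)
Your proposal is correct and matches the paper's (implicit) argument: the corollary is stated as a direct specialization of Theorem \ref{Te1} to the three space-form models, and your verification that $(f')^2-ff''\equiv 1=k_M$ via \eqref{ssaa} and that the two inequalities of \eqref{hyp} reduce to the stated bounds (with $f''/f\le 0$ making the first inequality automatic in cases (a) and (b), and $f''/f=1$ forcing $H^2\ge n^2/(n+1)$ in case (c)) is exactly the intended reasoning.
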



Next, we focus our attention on hypersurfaces immersed into a particular class of warped product manifolds, which holds when the ambient space is an arbitrary Riemannian product manifold.

\begin{theorem}Let $(\Sigma^{n},g,h,\lambda)$ be a gradient almost Yamabe soliton immersed into a Riemannian product $I\times M^{n}$. If the angle function $\theta$ does not change sign, then $\Sigma^{n}$ is a totally umbilical hypersurface of $I\times M$.
\end{theorem}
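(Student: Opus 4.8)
The plan is to feed the product structure into the identity of Lemma~\ref{prop} and read off umbilicity almost directly. In a Riemannian product $I\times M^{n}$ the warping function is constant, so $(\log f)'\equiv 0$ and the first term on the right-hand side of \eqref{trace} drops out, leaving
\[
(scal_{g}-\lambda)\,g(X,Y)=\theta\,g(AX,Y),\qquad X,Y\in\mathfrak{X}(\Sigma).
\]
First I would trace this over a local orthonormal frame $\{E_{i}\}_{i=1}^{n}$ to obtain $n(scal_{g}-\lambda)=\theta\,\mathrm{tr}(A)=n\theta H$, that is $scal_{g}-\lambda=\theta H$. Reinserting this into the displayed identity gives $\theta\,g(AX,Y)=\theta H\,g(X,Y)$ for all $X,Y$, so that, writing $\mathring{A}=A-H\,\mathrm{Id}$ for the traceless part of the shape operator,
\[
\theta\,\mathring{A}=0\qquad\text{as an endomorphism of } T\Sigma .
\]

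On the open set $U=\{p\in\Sigma:\theta(p)\neq 0\}$ this immediately forces $A=H\,\mathrm{Id}$, i.e. $\Sigma$ is totally umbilical there. Hence the entire force of the theorem lies in propagating $\mathring{A}=0$ across the vanishing locus $Z=\{\theta=0\}$, and this is precisely where the hypothesis that $\theta$ does not change sign must be used. Since $\mathring{A}$ is smooth, it already vanishes on $\overline{U}$ by continuity, so only the interior of $Z$ remains to be controlled; establishing that $\mathrm{int}(Z)=\varnothing$ — equivalently, that $U$ is dense — is the step I expect to be the main obstacle, and the place where the sign condition, rather than mere nonvanishing, has to do genuine work.

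To attack $\mathrm{int}(Z)$ I would assemble three structural facts. Because $\partial_{t}$ is parallel in the product (the computation in the proof of Lemma~\ref{prop} with $f'=0$), equations \eqref{eq1} and \eqref{eq2} yield $\nabla\theta=-A\nabla h$; the soliton equation \eqref{def2} together with $scal_{g}-\lambda=\theta H$ gives $Hess_{g}h=\theta H\,g$; and $|\nabla h|^{2}=1-\theta^{2}$. On $U$ the first of these specializes to $\nabla\theta=-H\nabla h$, while on $\mathrm{int}(Z)$ all three force $|\nabla h|^{2}=1$, $\nabla\theta=0$ and $Hess_{g}h=0$, so $\nabla h$ would be a parallel unit field. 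The delicate point, which I regard as the crux, is to argue that $\theta$ keeping a fixed sign makes every point of $\partial Z$ an interior extremum of $\theta$ and to run the relation $\nabla\theta=-H\nabla h$ up to $\partial Z$ so as to preclude $\theta$ becoming stationary on an open set while retaining its sign nearby; this would give $\mathrm{int}(Z)=\varnothing$. Once density of $U$ is secured, continuity of $\mathring{A}$ upgrades $A=H\,\mathrm{Id}$ from $U$ to all of $\Sigma$, yielding the claimed total umbilicity.
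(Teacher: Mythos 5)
Your first paragraph reproduces the paper's proof exactly: with $f$ constant the first term of \eqref{trace} vanishes, tracing gives $scal_{g}-\lambda=\theta H$, and substituting back yields $\theta\,(A-H\,\mathrm{Id})=0$, whence $A=H\,\mathrm{Id}$ wherever $\theta\neq0$. The paper stops precisely there, writing $\lambda_{i}=\theta^{-1}(scal_{g}-\lambda)$ without comment, i.e.\ it tacitly reads ``$\theta$ does not change sign'' as ``$\theta$ is nowhere zero.'' Up to that reading your argument is complete and coincides with the paper's.

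The part of your proposal that goes beyond the paper --- propagating umbilicity across $Z=\{\theta=0\}$ by proving $\mathrm{int}(Z)=\varnothing$ --- is not just unfinished; it is a step that would fail, because the set $U=\{\theta\neq0\}$ need not be dense under the stated hypotheses. Take any hypersurface $\Sigma'\subset M^{n}$ that is not totally geodesic in $M$ and put $\Sigma^{n}=I\times\Sigma'\subset I\times M^{n}$. Then $h=(\pi_{I})|_{\Sigma}=t$ satisfies $Hess_{g}h=0$ for the product metric, so $(\Sigma^{n},g,h,\lambda)$ with $\lambda=scal_{g}$ is a (nontrivial) gradient almost Yamabe soliton with height-function potential; its angle function is identically zero, hence does not change sign; yet its shape operator kills $\partial_{t}$ while acting as the shape operator of $\Sigma'$ in $M$ on the fiber directions, so $\Sigma^{n}$ is totally umbilical only if $\Sigma'$ is totally geodesic. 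So no amount of work with $\nabla\theta=-A\nabla h$ and the sign condition can rule out $\mathrm{int}(Z)\neq\varnothing$; the theorem is simply false for $\theta\equiv0$. Your instinct that something must be said on $Z$ is sound, but the remedy is to strengthen (or reinterpret) the hypothesis to $\theta$ nowhere vanishing, under which your first paragraph --- and the paper's one-line division by $\theta$ --- already finishes the proof.
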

\begin{proof}
First, let us consider a local orthonormal frame $\{E_{i}\}_{i=1}^n$ of $\mathfrak{X}(\Sigma)$ associated with the Weingarten operator, i.e., $A(E_{i})=\lambda_{i}E_{i}$, where $\{\lambda_{i}\}_{i=1}^{n}$ are the principal curvatures of $\Sigma^n$. Since the warping function $f$ is constant, we deduce from Lemma \ref{prop} that 
\[(scal_{g}-\lambda)\delta_{ij}=\theta\lambda_{i}\delta_{ij},\]
which implies that 
\begin{equation*}\lambda_{i}=\theta^{-1}(scal_{g}-\lambda),\qquad 1\leq i\leq n.
\end{equation*}
Therefore, $\Sigma^n$ is totally umbilical with mean curvature $H=\theta^{-1}(scal_{g}-\lambda)$.
\end{proof}

In the particular case in which the ambient space is the Euclidean space, we obtain the following classification. 

\begin{cor}Let $(\Sigma^{n},g,h,\lambda)$ be a gradient almost Yamabe soliton immersed into the Euclidean space $\mathbb{R}^{n+1}$. If $\theta$ does not change sign, then $\Sigma^n$ is a hyperplane or a hypersphere of $\mathbb{R}^{n+1}$.
\end{cor}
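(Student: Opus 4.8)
The plan is to recognize the Euclidean space $\mathbb{R}^{n+1}$ as the Riemannian product $\mathbb{R}\times\mathbb{R}^n$, which is precisely the case $f\equiv 1$ in the list of warped product models of space forms in Section \ref{Pre}. With this identification the ambient space is a Riemannian product, so the hypotheses of the preceding theorem are in force. Since $\theta$ does not change sign by assumption, that theorem applies directly and yields that $\Sigma^n$ is a totally umbilical hypersurface of $\mathbb{R}^{n+1}$, with Weingarten operator $A=\mu\,\mathrm{Id}$, where $\mu=\theta^{-1}(scal_{g}-\lambda)$.

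It then remains to invoke the classical classification of totally umbilical hypersurfaces of Euclidean space, and the first step I would carry out is to show that the umbilical factor $\mu$ is constant. Because $\mathbb{R}^{n+1}$ has constant (zero) sectional curvature, the Codazzi equation reduces to $(\nabla_{X}A)Y=(\nabla_{Y}A)X$ for all $X,Y\in\mathfrak{X}(\Sigma)$. Substituting $A=\mu\,\mathrm{Id}$ gives $(X\mu)Y=(Y\mu)X$; choosing, for $n\geq 2$, a pair of linearly independent tangent vectors at each point forces $X\mu=0$ for every $X$, whence $\mu$ is locally constant and therefore constant on the connected $\Sigma^n$.

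With $\mu$ constant, two cases arise. If $\mu=0$ then $A\equiv 0$, so $\Sigma^n$ is totally geodesic and is an open piece of an affine hyperplane of $\mathbb{R}^{n+1}$. If $\mu\neq 0$, I would consider the center map $\psi+\mu^{-1}N$ and compute its differential using $A=\mu\,\mathrm{Id}$ together with $\overline{\nabla}_{X}N=-AX$; this differential vanishes, so the center map is a fixed point $p_{0}\in\mathbb{R}^{n+1}$, and then $\psi-p_{0}=-\mu^{-1}N$ gives $|\psi-p_{0}|=|\mu|^{-1}$ constant. Hence $\Sigma^n$ lies on the hypersphere of radius $|\mu|^{-1}$ centered at $p_{0}$. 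Under the implicit completeness and connectedness of the soliton immersion, these open pieces coincide with the full hyperplane or hypersphere, which is the desired conclusion.

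I expect the main (and essentially the only nontrivial) obstacle to be the constancy of $\mu$: everything hinges on the Codazzi equation in a space form together with the dimensional restriction $n\geq 2$, after which the geometric identification of the two umbilical models is routine.
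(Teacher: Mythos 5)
Your proposal is correct and follows exactly the route the paper intends: the corollary is stated without proof as an immediate consequence of the preceding theorem (which gives total umbilicity when $\theta$ does not change sign in a Riemannian product, here $\mathbb{R}^{n+1}=\mathbb{R}\times\mathbb{R}^{n}$) combined with the classical classification of totally umbilical hypersurfaces of Euclidean space. Your filling-in of that classical step via the Codazzi equation and the center map is accurate and is precisely the standard argument being invoked implicitly.
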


In order to investigate minimal gradient almost Yamabe solitons, we prove the following.

\begin{theorem}\label{mini}Let $(\Sigma^{n},g,h,\lambda)$ be a minimal gradient almost Yamabe soliton immersed into $I\times_{f}M^n$ with $f'(h)\geq0$, then the scalar curvature of $\Sigma^{n}$ satisfies $scal_g\geq\lambda$. Moreover, if $h$ reaches the maximum, then $scal_{g}\equiv\lambda$, $f'(h)=0$ and $\Sigma^n$ is a slice of $I\times M^{n}$.
\end{theorem}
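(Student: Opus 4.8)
The plan is to specialize Lemma \ref{prop} to the minimal case and extract sign information from the trace of equation \eqref{trace}. Since $\Sigma^n$ is minimal we have $H=0$, so tracing \eqref{trace} over an orthonormal frame $\{E_i\}_{i=1}^n$ gives
\begin{equation*}
n(scal_g-\lambda)=(\log f)'(h)\left[n-|\nabla h|^2\right]+\theta\,\mathrm{tr}(A),
\end{equation*}
and because $\mathrm{tr}(A)=nH=0$ the last term vanishes. Thus
\begin{equation*}
n(scal_g-\lambda)=(\log f)'(h)\left(n-|\nabla h|^2\right).
\end{equation*}
First I would use the identity $|\nabla h|^2=1-\theta^2$ from Section \ref{Pre} to rewrite the bracket as $n-1+\theta^2$, which is always strictly positive. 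Since $(\log f)'(h)=f'(h)/f(h)$ and $f>0$ by hypothesis on the warping function, the assumption $f'(h)\geq 0$ forces $(\log f)'(h)\geq 0$. Hence the right-hand side is nonnegative, yielding $scal_g\geq\lambda$; this establishes the first claim.

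For the rigidity part, I would invoke the gradient almost Yamabe soliton equation \eqref{def2} in its scalar form. Tracing \eqref{def2} gives $\Delta h=n(scal_g-\lambda)$, so combining with the displayed trace identity above we obtain
\begin{equation*}
\Delta h=(\log f)'(h)\left(n-1+\theta^2\right)\geq 0,
\end{equation*}
meaning $h$ is subharmonic. Suppose now that $h$ attains its maximum at some point $p\in\Sigma^n$. The strategy is to apply the strong maximum principle (or Hopf's lemma): since $h$ is a smooth subharmonic function attaining an interior maximum, it must be constant in a neighborhood of $p$, and on a connected manifold this propagates to force $h\equiv\mathrm{const}$. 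Once $h$ is constant, $\nabla h\equiv 0$, so $\theta^2=1-|\nabla h|^2=1$ and the trace identity collapses to $n(scal_g-\lambda)=n(\log f)'(h)$. Independently, $\Delta h=0$ together with $\Delta h=n(scal_g-\lambda)$ gives $scal_g\equiv\lambda$; feeding this back shows $(\log f)'(h)=0$, hence $f'(h)=0$. Finally, $\nabla h\equiv 0$ means $\partial_t^\top=0$, i.e. $\partial_t=\theta N$ is everywhere normal, which is precisely the condition that $\Sigma^n$ is a slice $\{t_0\}\times M^n$.

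The main obstacle I anticipate is the justification of the maximum-principle step: asserting that a subharmonic function attaining its maximum is constant requires either compactness or a careful appeal to the strong maximum principle together with connectedness of $\Sigma^n$, and one must be sure that the hypothesis ``$h$ reaches the maximum'' is interpreted as attainment at an interior point (which is automatic if $\Sigma$ is closed, but needs the connectedness already assumed for $M^n$, and hence for $\Sigma^n$, to globalize the local constancy). A secondary subtlety is confirming that the vanishing $\nabla h\equiv 0$ genuinely identifies $\Sigma^n$ with a single slice rather than merely forcing it to be tangent to slices; this follows because $h=(\pi_I)|_\Sigma$ constant means the whole image lies in one level set $\{t_0\}\times M^n$ of $\pi_I$, so the immersion factors through that slice.
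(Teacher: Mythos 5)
Your proof is correct and follows essentially the same route as the paper: trace the identity of Lemma \ref{prop} with $H=0$, use $|\nabla h|^{2}=1-\theta^{2}$ and $f'(h)\ge 0$ to get $scal_g\ge\lambda$, then apply the strong maximum principle to the subharmonic function $h$ via the open--closed argument on the level set of the maximum. The concerns you flag (interior attainment, connectedness, and identifying $h\equiv\mathrm{const}$ with a slice) are handled in the paper exactly as you propose.
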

\begin{proof}
Since $(\Sigma^{n},g,h,\lambda)$ is a minimal gradient almost Yamabe soliton, we deduce from the trace of \eqref{trace} in Lemma \ref{prop} that
\begin{equation*}
\begin{split}
    (scal_{g}-\lambda)n&=\frac{f'(h)}{f(h)}\left(n-|\nabla h|^{2}\right).
\end{split}
\end{equation*}
From $|\nabla h|^{2}=1-\theta^2$ and by the hypothesis $f'(h)\geq0$, we derive 
\begin{equation}\label{label}
    (scal_{g}-\lambda)n=\frac{f'(h)}{f(h)}\left(n-|\nabla h|^{2}\right)=\frac{f'(h)}{f(h)}\left(n-1+\theta^2\right)\geq0,
\end{equation}
which proves that $scal_{g}\geq\lambda$. On the other hand, it follows from equation \eqref{def2} that
\begin{equation}\label{maximo principio}
    \Delta h=n(scal_{g}-\lambda)\geq0.
\end{equation}
Now, assume that $h$ attains its maximum $h_{0}$ in the point $x_{0}\in \Sigma^{n}$ and define
\begin{equation*}
\Omega_{0}:=\{x\in \Sigma^{n}\hspace{0.2cm} ;\hspace{0.2cm} h(x)=h_{0}\}.
\end{equation*}
since $x_{0}\in \Omega_{0}$, it must be closed and non-empty. Let now $y\in \Omega_{0}$, then applying the maximum principle (see \cite{gilbarg2015elliptic} p. 35 ) to \eqref{maximo principio} we obtain that, $h(x)=h_{0}$ in a neighborhood of $y$ so that $\Omega_{0}$ is open. Connectedness of $\Sigma^{n}$ yields $\Omega_{0}=\Sigma$. Hence $h$ is constant, which implies that $\Sigma^{n}$ is a slice, $scal_{g}=\lambda$ and $f'(h)=0$.
\end{proof}

As a consequence of Theorem \ref{mini} we obtain a condition for nonexistence of minimal immersion of gradient almost Yamabe solitons into the hyperbolic space and Euclidean space. More precisely, we derive the following corollary.

\begin{cor}Let $\psi:\Sigma^n\rightarrow I\times_{f}M^n$ be an isometric immersion of a gradient almost Yamabe soliton $(\Sigma^{n},g,h,\lambda)$  into a warped product $I\times_{f}M^n$. Then the following conditions hold.
\begin{itemize}[leftmargin=16pt,align=left,labelwidth=\parindent,labelsep=0pt]
\item [\textup{(a)}] \hspace{0,2cm}If $I\times_{f}M^n=\mathbb{R}\times_{e^t}\mathbb{R}^n$ and $\lambda>-n(n-1)-|A|^2$, then $\psi$ can not be minimal.
\vspace{0,2cm}
\item [\textup{(b)}]\hspace{0,2cm}If $I\times_{f}M^n=(0,\infty)\times_{t}\mathbb{S}^{n}$ and $\lambda>-|A|^2$, then $\psi$ can not be minimal.
\end{itemize}
\end{cor}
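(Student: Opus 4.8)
The plan is to argue by contradiction in each case: I assume that $\psi$ is minimal and derive from the hypotheses an inequality on $\lambda$ that contradicts the one assumed in the statement. The argument combines two ingredients: the value of $scal_g$ forced by minimality in the specific ambient space, and the lower bound $scal_g\geq\lambda$ provided by Theorem \ref{mini}.

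First I would check that both ambient spaces satisfy the hypothesis $f'(h)\geq0$ of Theorem \ref{mini}. For $\mathbb{R}\times_{e^t}\mathbb{R}^n$ one has $f(t)=e^t$, hence $f'(h)=e^h>0$; for $(0,\infty)\times_t\mathbb{S}^n$ one has $f(t)=t$, hence $f'(h)=1>0$. Thus, if $\psi$ were minimal, Theorem \ref{mini} would give $scal_g\geq\lambda$ at every point of $\Sigma^n$.

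Next I would compute $scal_g$ under $H=0$. The most economical route is to recognise, from the list following \eqref{ssaa}, that $\mathbb{R}\times_{e^t}\mathbb{R}^n$ is the hyperbolic space $\mathbb{H}^{n+1}$ of constant curvature $c=-1$ and that $(0,\infty)\times_t\mathbb{S}^n$ is (a dense open subset of) the Euclidean space $\mathbb{R}^{n+1}$ of constant curvature $c=0$. Substituting the corresponding data into \eqref{eq4}---equivalently, using the Gauss equation $scal_g=n(n-1)c+n^2H^2-|A|^2$ for a hypersurface of a space form---and imposing $H=0$ yields
\begin{equation*}
scal_g=-n(n-1)-|A|^2\quad\text{in case (a)},\qquad scal_g=-|A|^2\quad\text{in case (b)}.
\end{equation*}
The one genuinely computational point, and the step I expect to require most care, is the fiber term in \eqref{eq4} for case (b): since $\mathbb{S}^n$ has constant sectional curvature $K^M\equiv1$, the double sum $\sum_{i,j}K^M(E_j^\ast,E_i^\ast)[\,\cdots\,]$ is evaluated by the contractions $\sum_i\langle E_i,\nabla h\rangle^2=|\nabla h|^2$ and $\sum_{i,j}\langle E_i,E_j\rangle^2=n$, after which all the $h$-dependent contributions cancel and one is left with exactly $n^2H^2-|A|^2$.

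Finally I would assemble the contradiction. In case (a), $scal_g\geq\lambda$ and $scal_g=-n(n-1)-|A|^2$ force $\lambda\leq-n(n-1)-|A|^2$, contradicting the hypothesis $\lambda>-n(n-1)-|A|^2$; hence $\psi$ cannot be minimal. In case (b), $scal_g\geq\lambda$ and $scal_g=-|A|^2$ give $\lambda\leq-|A|^2$, contradicting $\lambda>-|A|^2$. Apart from the fiber-term simplification noted above, the whole proof is a substitution into \eqref{eq4} followed by a sign comparison, so no serious obstacle is anticipated.
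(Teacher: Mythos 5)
Your proof is correct and follows exactly the route the paper intends: the corollary is stated as a consequence of Theorem \ref{mini}, and you combine its conclusion $scal_g\geq\lambda$ (after checking $f'(h)\geq0$ in both models) with the space-form Gauss identity $scal_g=n(n-1)c+n^2H^2-|A|^2$ under $H=0$ to contradict the stated lower bounds on $\lambda$. Your verification that the fiber term in \eqref{eq4} cancels in case (b) is also accurate, so nothing is missing.
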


As another application of Theorem \ref{mini} we also get

\begin{cor}\label{coro}Let $(\Sigma^{n},g,h,\lambda)$ be a minimal gradient almost Yamabe soliton immersed into the Riemannian product manifold $I\times M^n$. If $h$ reaches it's maximum, then $(\Sigma^n,g)$ is isometric to $M^n$.
\end{cor}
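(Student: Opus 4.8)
The plan is to apply Theorem \ref{mini} directly, since Corollary \ref{coro} is the special case in which the ambient warped product has constant warping function. First I would observe that in a Riemannian product $I\times M^n$ the warping function $f$ is constant, so that $f'(h)\equiv 0$; in particular the hypothesis $f'(h)\geq 0$ of Theorem \ref{mini} is trivially satisfied. Therefore, under the standing assumption that $h$ attains its maximum, the conclusion of Theorem \ref{mini} applies and yields that $h$ is constant and that $\Sigma^n$ is a slice $\{t_0\}\times M^n$ of $I\times M^n$ for some $t_0\in I$.

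Next I would upgrade the conclusion \emph{$\Sigma^n$ is a slice} to the sharper statement that $\Sigma^n$ is isometric to $M^n$. The key point is that the warped product metric restricted to a slice $\{t_0\}\times M^n$ is
\[
\langle\ ,\ \rangle\big|_{\Sigma}=f^2(t_0)\,g_M,
\]
and since $f\equiv 1$ in a Riemannian product (equivalently, $f^2(t_0)=1$), this restricted metric is precisely $g_M$. Hence the inclusion $i:\{t_0\}\times M^n\hookrightarrow I\times M^n$ is an isometry onto its image when $M^n$ carries the metric $g_M$, and $(\Sigma^n,g)$ is isometric to $(M^n,g_M)$.

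I expect the argument to be essentially immediate once Theorem \ref{mini} is invoked, so there is no serious obstacle; the only point requiring a small amount of care is the final identification of the induced metric on the slice with $g_M$. One must note that, in the general warped product $I\times_f M^n$, a slice is isometric to $M^n$ rescaled by the constant factor $f^2(t_0)$, so it is only isometric to $(M^n,g_M)$ itself when the warping function equals $1$ (as holds for a genuine Riemannian product). Keeping track of this constant conformal factor is the one place where the Riemannian-product hypothesis is genuinely used beyond what Theorem \ref{mini} already provides.
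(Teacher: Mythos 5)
Your proposal is correct and follows exactly the route the paper intends: the corollary is presented as a direct application of Theorem \ref{mini}, where the Riemannian-product hypothesis makes $f$ constant so that $f'(h)\geq 0$ holds trivially, $h$ attains its maximum by assumption, and the resulting slice $\{t_{0}\}\times M^{n}$ carries the induced metric $f^{2}(t_{0})g_{M}=g_{M}$. Your extra remark about tracking the constant conformal factor $f^{2}(t_{0})$ is a sensible precision, but it does not change the argument.
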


\begin{observation}
Corollary \ref{coro}, reveals that there not exist a compact minimal gradient almost Yamabe soliton $\Sigma^n$ immersed into the Riemannian product manifold $I\times M^n$ with $M^n$ noncompact.
\end{observation}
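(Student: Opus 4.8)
The final statement to prove is the Remark following Corollary \ref{coro}, namely:

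\medskip

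\noindent\textbf{Claim.} There does not exist a compact minimal gradient almost Yamabe soliton $\Sigma^n$ immersed into the Riemannian product $I\times M^n$ with $M^n$ noncompact.

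\medskip

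The plan is to argue by contradiction, leveraging Corollary \ref{coro} as the essential input. Suppose such a compact minimal gradient almost Yamabe soliton $(\Sigma^n,g,h,\lambda)$ immersed into $I\times M^n$ exists, with $M^n$ noncompact. The first step is to observe that since $\Sigma^n$ is compact, the height function $h=(\pi_I)|_\Sigma$, being continuous on a compact space, must attain a maximum. This is precisely the hypothesis required to invoke Corollary \ref{coro}: because $h$ reaches its maximum, we conclude that $(\Sigma^n,g)$ is isometric to the fiber $M^n$.

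The second step is to extract the contradiction from this isometry. Since $\Sigma^n$ is assumed compact and $(\Sigma^n,g)$ is isometric to $M^n$, it follows that $M^n$ must itself be compact (compactness is a topological/metric invariant preserved under isometry). This directly contradicts the hypothesis that $M^n$ is noncompact. Hence no such immersion can exist, which establishes the Remark.

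I expect this argument to be essentially immediate once Corollary \ref{coro} is in hand, so the only genuine point requiring care is verifying that the hypotheses of Corollary \ref{coro} are indeed satisfied in the present setting. The nontrivial content is entirely absorbed into Corollary \ref{coro}, whose proof in turn rests on Theorem \ref{mini}: there the maximum principle forces $h$ to be constant, making $\Sigma^n$ a slice $\{t_0\}\times M^n$, which is isometric to $M^n$ in a Riemannian product (where the warping function is constant, so the fiber metric is undistorted). The main obstacle, if any, is simply confirming that compactness of $\Sigma$ guarantees $h$ attains its maximum so that Corollary \ref{coro} applies verbatim; beyond that, the implication ``isometric to noncompact $M^n$ yet compact'' is a contradiction that closes the argument.
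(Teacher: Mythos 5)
Your argument is correct and is exactly the reasoning the paper intends (the Remark is stated without proof, but it is meant to follow from Corollary \ref{coro} precisely as you describe): compactness of $\Sigma^n$ guarantees $h$ attains its maximum, Corollary \ref{coro} then yields an isometry between $\Sigma^n$ and $M^n$, and compactness being an isometry invariant contradicts the noncompactness of $M^n$.
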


Our next result provides an extension of Theorem 1.5 in \cite{barros2011immersion} in the scope of gradient almost Yamabe solitons immersed into warped product manifolds.


\begin{theorem}\label{Te111}Let $(\Sigma^{n},g,h,\lambda)$ be a gradient almost Yamabe soliton immersed into $I\times_{f} M^{n}$ whose fiber $M^n$ has sectional curvature $k_{M}\leq\inf_{\substack{I}}((f')^{2}-ff'')$.
\begin{itemize}[leftmargin=16pt,align=left,labelwidth=\parindent,labelsep=0pt]
\item [\textup{(a)}] \hspace{0,2cm}If $|\nabla h|\in L^{1}(\Sigma)$ and the soliton function satisfies
   $$\lambda\geq -n(n-1)\frac{f''(h)}{f(h)}+n^{2}H^{2},$$
then $\Sigma^n$ is totally geodesic, with scalar curvature $scal_{g}=-n(n-1)\frac{f''}{f}$ and $k_{M}=(f')^{2}-ff''$.
\item [\textup{(b)}] \hspace{0,2cm}If $|\nabla h|\in L^{1}(\Sigma)$ and the soliton function satisfies
     $$\lambda\geq n(n-1)\Big{(}H^{2}-\frac{f''(h)}{f(h)}\Big{)},$$ 
then $\Sigma^n$ is totally umbilical, with scalar curvature $scal_{g}=n(n-1) (H^2-\frac{f''}{f})$ and with $k_{M}=(f')^{2}-ff''$.
\end{itemize}
\end{theorem}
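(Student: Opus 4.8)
The plan is to show that under either hypothesis the potential $h$ is superharmonic, to promote this to harmonicity via Yau's maximum principle (Lemma~\ref{yau}), and then to read off the rigidity from the equality case. Tracing the fundamental soliton equation \eqref{def2} gives immediately $\Delta h = n(scal_g-\lambda)$, so the whole problem reduces to bounding $scal_g$ from above by $\lambda$ pointwise.

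The core of the argument is to extract from the scalar curvature expression \eqref{eq4} the inequality
\[
scal_g \;\le\; -\,n(n-1)\frac{f''}{f}\;+\;n^2H^2\;-\;|A|^2 .
\]
Writing $a=[(\log f)'(h)]^2$ and $b=\frac{f''}{f}(h)$, so that $(\log f)''=b-a$ and $(f')^2-ff''=(a-b)f^2$, I would first collapse the purely warping terms of \eqref{eq4} to $2(n-1)(a-b)|\nabla h|^2-n(n-1)a$. For the fiber term I would observe that the diagonal contributions vanish identically and that, since $\sum_i\langle\nabla h,E_i\rangle^2=|\nabla h|^2=1-\theta^2\le 1$, each off-diagonal bracket equals $1-\langle\nabla h,E_i\rangle^2-\langle\nabla h,E_j\rangle^2\ge\theta^2\ge 0$. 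This nonnegativity is exactly what permits inserting the curvature hypothesis $k_M\le\inf_I((f')^2-ff'')$, bounding every $K^M$ above by $(a-b)f^2$; summing over the $n(n-1)$ ordered pairs then yields a fiber bound of $(n-1)(a-b)(n-2|\nabla h|^2)$. Adding the warping and fiber contributions, the $|\nabla h|^2$-terms cancel and leave precisely $-n(n-1)b$, which is the displayed inequality once $n^2H^2-|A|^2$ is restored.

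The two cases now separate through an elementary bound on the shape operator: in (a) I simply discard $|A|^2\ge 0$, while in (b) I use $|A|^2\ge nH^2$ (Cauchy--Schwarz, with equality exactly when $A=HI$). Each hypothesis on $\lambda$ then gives $scal_g\le\lambda$, hence $\Delta h\le 0$; since $|\nabla h|\in L^1(\Sigma)$, Lemma~\ref{yau} applied to $-h$ forces $h$ to be harmonic, so $\Delta h\equiv 0$ and $scal_g\equiv\lambda$. Retracing equality through the estimate then pins everything down: in (a) it forces $|A|^2=0$, so $\Sigma^n$ is totally geodesic with $H\equiv 0$ and $scal_g=-n(n-1)\frac{f''}{f}$; in (b) it forces $|A|^2=nH^2$, so $\Sigma^n$ is totally umbilical with $scal_g=n(n-1)\bigl(H^2-\frac{f''}{f}\bigr)$. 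Equality in the fiber estimate, valid where the brackets are positive (that is, where $\theta\neq 0$) and extended by continuity, forces $K^M=(f')^2-ff''$, i.e. $k_M=(f')^2-ff''$, so that by \eqref{ssaa} the ambient restricts to a space form along the range of $h$.

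The main obstacle is the fiber estimate of the second paragraph: correctly collapsing the warping terms and, above all, controlling the curvature sum over the \emph{non-orthonormal} projected frame $\{E_i^\ast\}$, for which one needs simultaneously the sign of the brackets and an exact accounting of the pair sums so that the $|\nabla h|^2$-dependence cancels and leaves exactly $-n(n-1)\frac{f''}{f}$.
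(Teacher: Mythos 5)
Your proposal is correct and follows essentially the same route as the paper: the same upper bound $scal_g\le -n(n-1)\frac{f''}{f}+n^2H^2-|A|^2$ extracted from \eqref{eq4} via the hypothesis on $k_M$, then $\Delta h=n(scal_g-\lambda)\le 0$, Yau's Lemma \ref{yau} applied to $-h$, and the equality case forcing $|A|^2=0$ in (a) and $|\Phi|^2=|A|^2-nH^2=0$ in (b). Your added care in checking that the off-diagonal brackets are nonnegative (so the bound on $K^M$ can legitimately be inserted) and in tracking the equality case for $k_M$ only sharpens the paper's argument without changing it.
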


\begin{proof}First, note that our hypothesis under the sectional curvature $k_{M}$ jointly with \eqref{eq4} implies that
\begin{equation}\label{eq4444}
\begin{split}
    scal_{g}&\leq \frac{\inf_{I}((f')^2-ff'')}{f^2}(n-1)\big{(}n-2|\nabla h|^2\big{)}+n[(\log f)'(h)]^{2}\left(|\nabla h|^{2}-(n-1)\right)\\
&\quad-(n-2)(\log f)''(h)|\nabla h|^{2}-n\frac{f''}{f}|\nabla h|^{2}+n^{2}H^{2}-|A|^{2}\\
&\leq-(n-1)(\log f)''(h)\big{(}n-2|\nabla h|^2\big{)}+n[(\log f)'(h)]^{2}\left(|\nabla h|^{2}-(n-1)\right)\\
&\quad-(n-2)(\log f)''(h)|\nabla h|^{2}-n\frac{f''}{f}|\nabla h|^{2}+n^{2}H^{2}-|A|^{2}\\
&\leq -n(n-1)\frac{f''(h)}{f(h)}+n^{2}H^{2}-|A|^{2}.
\end{split}
\end{equation}
Hence, combining our assumption on soliton function $\lambda$ with inequality \eqref{eq4444}, we deduce that
\begin{equation}\label{deltinha}
\Delta h=n(scal_{g}-\lambda)=n\left(-n(n-1)\frac{f''(h)}{f(h)}+n^{2}H^{2}-\lambda-|A|^{2}\right)\leq 0.
\end{equation}

Now, from Lemma \ref{yau}, we derive that $h$ is harmonic, and then from \eqref{deltinha}, $\Sigma^n$ must be totally geodesic with $scal_{g}=\lambda=-n(n-1)\frac{f''}{f}$. On the other hand, from $scal_{g}=\lambda$, we get that $k_{M}=(f')^2-ff''$.

For the second assertion, note that the traceless second fundamental form of $\Sigma^n$, namely, $\Phi=A-HI$, satisfies $|\Phi|^{2}=tr(\Phi^2)=|A|^{2}-nH^{2}\geq0$ and equality holds if, and only if, $\Sigma^{n}$ is totally umbilical. In this direction, from the hypothesis on $\lambda$ and equation \eqref{deltinha}, it yields
\begin{equation}\label{max2}
    \begin{split}
        \Delta h=n(scal_{g}-\lambda)= n\left[n(n-1)\left(-\frac{f''(h)}{f(h)}+H^{2}\right)-\lambda-|\Phi|^{2}\right]\leq0.
    \end{split}
\end{equation}
Hence, again from Lemma \ref{yau}, we deduce that $scal_{g}=\lambda=n(n-1)\left(H^2-\frac{f''}{f}\right)$ and $|\Phi|^{2}=0$, which gives that $\Sigma^n$ is totally umbilical. On the other hand, from $scal_{g}=\lambda$, we get that $k_{M}=(f')^2-ff''$.
\end{proof}

Proceeding, it is a well-known fact that any compact gradient almost Yamabe soliton with constant scalar curvature is isometric to euclidean sphere $\mathbb{S}^n$ (cf.\cite{barbosa2013conformal}). Using this result, we derive the following rigidity result.


\begin{theorem}\label{Teo122}
Let $(\Sigma^{n},g,h,\lambda)$ be a compact gradient almost Yamabe soliton immersed into a space form $\overline{M}^{n+1}(c)$ of curvature $c$. If $\lambda\geq(n-1)c+n|H|^{2}$, then $(\Sigma^n,g)$ is isometric to Euclidean sphere $(\mathbb{S}^n,g_{1})$.
\end{theorem}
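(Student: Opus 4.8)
The plan is to reduce the statement to the cited rigidity result (every compact gradient almost Yamabe soliton with constant scalar curvature is isometric to $\mathbb{S}^n$) by proving that $\Sigma^n$ has constant scalar curvature. Since the ambient space $\overline{M}^{n+1}(c)$ has constant sectional curvature, the tangential part of the ambient curvature in the Gauss equation \eqref{eq3} reduces to $c\left(\langle Y,Z\rangle X-\langle X,Z\rangle Y\right)$, so specializing the computation that produced \eqref{Ric}--\eqref{eq4} to this case gives the classical formula
\[ scal_g = n(n-1)c + n^2H^2 - |A|^2. \]
I would record this first, either by inserting $\tfrac{f''}{f}=-c$ and constant $K^M$ into \eqref{eq4} or, more cleanly, by invoking the standard Gauss relation valid intrinsically in any constant-curvature space. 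Coupling it with the trace of the soliton equation \eqref{def2},
\[ \Delta h = n(scal_g - \lambda), \]
provides the bridge between the extrinsic geometry and the potential.

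Next I would combine the hypothesis on $\lambda$ with Newton's inequality $|A|^2 \ge nH^2$ (equality exactly at umbilic points). Writing $\Phi=A-HI$ for the traceless second fundamental form, so that $|A|^2=nH^2+|\Phi|^2$, the scalar-curvature formula and the lower bound on $\lambda$ yield $scal_g \le \lambda$, hence
\[ \Delta h = n(scal_g-\lambda) \le -n|\Phi|^2 \le 0, \]
that is, $h$ is superharmonic on the closed manifold $\Sigma^n$.

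The compactness is then exploited in the usual way: since $\Sigma^n$ is closed, the divergence theorem gives $\int_\Sigma \Delta h\, dv_g = 0$, and a nonpositive integrand with vanishing integral must vanish identically, so
\[ 0 = \int_{\Sigma}\Delta h\, dv_g \le -n\int_{\Sigma}|\Phi|^2\, dv_g \le 0. \]
Thus $|\Phi|\equiv 0$, i.e. $\Sigma^n$ is totally umbilical, and simultaneously $scal_g\equiv\lambda$. A totally umbilical hypersurface of a space form has constant mean curvature: the Codazzi equation in a constant-curvature ambient reads $(\nabla_X A)Y=(\nabla_Y A)X$, which for $A=HI$ and $n\ge 2$ forces $XH=0$ for every $X$. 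Consequently $scal_g=n(n-1)(c+H^2)$ is constant, and the cited classification of compact gradient almost Yamabe solitons forces $(\Sigma^n,g)$ to be isometric to $(\mathbb{S}^n,g_1)$.

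The step I expect to be the main obstacle is the passage in the final part from ``$scal_g=\lambda$, a priori only a function'' to ``$scal_g$ is genuinely constant'': this is precisely where the umbilicity extracted from the equality case must be fed into the space-form Codazzi equation to deduce constancy of $H$, and hence of the scalar curvature, before the rigidity theorem can be applied. A secondary point to verify carefully is the normalization of the mean curvature $H$ in \eqref{Ric}, so that the constants in the scalar-curvature formula and in the hypothesis on $\lambda$ are correctly matched; getting this wrong would misstate the threshold for $\Delta h\le 0$.
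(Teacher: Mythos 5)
Your proposal is correct and its skeleton is the same as the paper's: specialize the Gauss equation to $scal_{g}=n(n-1)c+n^{2}H^{2}-|A|^{2}$, trace \eqref{def2} to get $\Delta h=n(scal_{g}-\lambda)$, use the hypothesis on $\lambda$ to force $\Delta h\leq 0$, exploit compactness, and finish with Theorem 1.5 of \cite{barbosa2013conformal}. Two differences are worth recording. First, where you integrate $\Delta h$ over the closed manifold, the paper applies the Hopf maximum principle to conclude that $h$ itself is constant; the two devices are interchangeable here. Second, and more substantively, you treat the equality case differently: the paper reads off $|A|^{2}=0$ and $scal_{g}=n(n-1)c$ directly from \eqref{ee}, which tacitly uses the threshold $\lambda\geq n(n-1)c+n^{2}H^{2}$ (so that the slack in $\Delta h\leq0$ is $-n|A|^{2}$), whereas you absorb $nH^{2}$ into $|A|^{2}$, obtain only $|\Phi|\equiv0$, and then invoke the Codazzi equation to promote $scal_{g}=\lambda$ from an identity between functions to a genuine constant before the rigidity theorem can be applied. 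Your version therefore runs under the weaker threshold $\lambda\geq n(n-1)(c+H^{2})$, and it is more careful at precisely the step you identified as the main obstacle; the paper gets constancy of $scal_{g}$ for free only because total geodesy kills $H$. Finally, the normalization worry you flag is well founded and is not a defect of your argument: as literally stated, the hypothesis $\lambda\geq(n-1)c+n|H|^{2}$ is smaller by a factor of $n$ than what either your computation or the paper's own inequality \eqref{ee} needs to conclude $scal_{g}\leq\lambda$ from the Gauss formula, so the discrepancy lies in the statement of the theorem rather than in your proof.
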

\begin{proof}Since a warped product $I\times_{f}M^n$ of constant curvature $c$ trivially fulfills the condition $k_{M}=\inf_{I}((f')^2-ff'')$, we obtain in a similar way as in the demonstration of Theorem \ref{Te111} that
\begin{equation}\label{ee}
\Delta h=n(scal_{g}-\lambda)=n\left(n(n-1)c+n^{2}H^{2}-\lambda-|A|^{2}\right)\leq 0,
\end{equation}
which implies that $-h$ is subharmonic, and from the maximum principle, $h$ must by constant. Hence, from \eqref{ee}, we derive that $|A|^2=0$ and  $scal_{g}=n(n-1)c$, so the result follows by Theorem 1.5 of \cite{barbosa2013conformal}.
\end{proof}

\begin{observation}
We remark that Theorem \ref{Te111} and Theorem \ref{Teo122} are obtained in the general case, without assumption $h=(\pi_{I})|_{\Sigma}$. However, upon assuming this condition, we obtain from item $(a)$ jointly with Lemma \ref{prop} that either $\Sigma^n$ is a slice, or $f$ is a constant and $\Sigma^n$ is a totally geodesic hypersurface into a product manifold of zero sectional curvature.

For item $(b)$, we deduce $Hessh=(scal_{g}-\lambda)g=0$, then either $h$ is constant, wich implies that $\Sigma^n$ is trivial, or $|\nabla h|\neq 0$ and $\Sigma^n$ splits along the gradient of $h$. In the last case, from Lemma \ref{prop}, we get
\[\frac{f'(h)}{f(h)}dh\otimes dh=\left(\frac{f'(h)}{f(h)}+\theta H\right)g.\]
Hence, from lemma 1 of \cite{barros2014characterizations}, we obtain $\log f(h)'|\nabla h|^2=0$,
which implies that $f$ is a constant. So, $\Sigma^n$ is a totally umbilical hypersurface into a Riemannian product manifold of zero sectional curvature. Finally, Theorem \ref{Teo122} in the particular case $h=(\pi_{I})|_{\Sigma}$  remains the same.
\end{observation}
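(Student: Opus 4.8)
The plan is to extract the hypersurface data already produced by Theorem \ref{Te111} and Theorem \ref{Teo122} and then feed the extra structural identity of Lemma \ref{prop}, which becomes available only once $h=(\pi_I)|_\Sigma$, back into those conclusions. Throughout I assume $n\geq 2$, so that the orthogonal complement of $\nabla h$ is nontrivial wherever $\nabla h\neq 0$; this linear-algebra fact is what drives both dichotomies and is the crux of the whole argument.

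First I would treat item $(a)$. Theorem \ref{Te111}$(a)$ already gives that $\Sigma^n$ is totally geodesic, so $A\equiv 0$, and that $scal_g=\lambda$. Substituting $A=0$ and $scal_g-\lambda=0$ into \eqref{trace} collapses Lemma \ref{prop} to
\[(\log f)'(h)\,[\,g-dh\otimes dh\,]=0.\]
The symmetric tensor $g-dh\otimes dh$ is positive on the orthogonal complement of $\nabla h$, and its trace equals $n-|\nabla h|^2=n-1+\theta^2>0$, so it is nowhere zero; hence $(\log f)'(h)=f'(h)/f(h)=0$ pointwise. This forces the stated dichotomy: either $h$ is constant, in which case $\psi$ maps $\Sigma^n$ into a single slice $\{t_0\}\times M^n$ and, being an equidimensional isometric immersion, realises $\Sigma^n$ as a slice; or $h$ is nonconstant, so $f'$ vanishes on the whole (interval) range of $h$, i.e. $f$ is constant there. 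In the latter case $f''=0$, so the values $scal_g=-n(n-1)f''/f$ and $k_M=(f')^2-ff''$ coming from Theorem \ref{Te111}$(a)$ both degenerate to zero, exhibiting $\Sigma^n$ as a totally geodesic hypersurface of a flat Riemannian product $I\times M^n$.

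Next I would handle item $(b)$. Here Theorem \ref{Te111}$(b)$ furnishes $scal_g=\lambda$ and total umbilicity $A=HI$. Since $scal_g-\lambda=0$, the soliton equation \eqref{def2} immediately yields $Hess_g h=0$, so $\nabla h$ is parallel and $|\nabla h|$ is constant. If $\nabla h=0$ then $h$ is constant and the soliton is trivial; otherwise $\nabla h$ is a nowhere-vanishing parallel field, whence $\Sigma^n$ splits locally, by de Rham, as a Riemannian product along the $\nabla h$-direction. Substituting $A=HI$ and $scal_g-\lambda=0$ into Lemma \ref{prop} produces exactly
\[\frac{f'(h)}{f(h)}\,dh\otimes dh=\left(\frac{f'(h)}{f(h)}+\theta H\right)g.\]
The decisive step is the rank argument of Lemma 1 in \cite{barros2014characterizations}: evaluating on a unit vector orthogonal to $\nabla h$ (available because $n\geq 2$ and $\nabla h\neq 0$) annihilates the left-hand side and forces $f'(h)/f(h)+\theta H=0$; feeding this back and evaluating on $\nabla h$ then gives $(\log f)'(h)|\nabla h|^2=0$, hence $f'(h)=0$ and $f$ constant. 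As in item $(a)$, $f$ constant makes the ambient a flat Riemannian product, so $\Sigma^n$ is a totally umbilical hypersurface of $I\times M^n$ with $k_M=0$.

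Finally, for Theorem \ref{Teo122} I would simply observe that its proof already concludes, via the maximum principle applied to the subharmonic function $-h$, that $h$ is constant and $|A|=0$ before identifying $\Sigma^n$ with $\mathbb{S}^n$; imposing $h=(\pi_I)|_\Sigma$ therefore adds nothing beyond $h\equiv\mathrm{const}$, and the statement is unchanged. The only genuine obstacle across the whole Observation is the tensorial rank argument in item $(b)$: one must argue carefully that the rank-one symmetric tensor $dh\otimes dh$ can equal a functional multiple of the full metric $g$ only when that multiple annihilates it, which is precisely where $n\geq 2$ and the nonvanishing of $\nabla h$ enter.
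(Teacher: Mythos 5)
Your proposal is correct and follows essentially the same route as the paper's inline argument: substitute the conclusions $A=0$, $scal_g=\lambda$ (item (a)) and $A=HI$, $Hess_g h=0$ (item (b)) into Lemma \ref{prop}, extract $(\log f)'(h)=0$ respectively $\frac{f'(h)}{f(h)}dh\otimes dh=\big(\frac{f'(h)}{f(h)}+\theta H\big)g$, and conclude the slice/flat-product dichotomies, with Theorem \ref{Teo122} unaffected since its proof already forces $h$ constant. The only difference is that you prove the rank-one step directly (testing on a unit vector orthogonal to $\nabla h$ and then on $\nabla h$ itself) where the paper simply cites Lemma 1 of \cite{barros2014characterizations}; this is the same content, made self-contained.
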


\section{Classification of rotational gradient almost Yamabe solitons}
\label{classification}


In this section, we present a classification of rotational gradient almost Yamabe solitons immersed into $\mathbb{R}\times_{f}\mathbb{R}^{n}$ with potential $h:=(\pi_{I})|_{\Sigma}$ and constant angle $\theta\in (0,1)$. Following Dajczer and do Carmo \cite{do2012rotation}, we shall use the terminology of rotational hypersurface in $\mathbb{R}\times_{f}\mathbb{R}^{n}$ as a hypersurface invariant by the orthogonal group $O(n)$ seen as a subgroup of the isometries group of $\mathbb{R}\times_{f}\mathbb{R}^{n}$.



Initially, consider the coordinates $(t,x_{1},\dots,x_{n})$, as well as the standard orthonormal basis $\{\eta_{1}, \dots , \eta_{n+1}\}$ of $\mathbb{R}\times_{f}\mathbb{R}^{n}$. Then, up to isometry, we can assume the rotation axis to be $\eta_{1}$. Consider a parametrized by the arc length curve in the $tx_{n}$ plane given by
\begin{align*}
      \gamma \colon (t_{0}&,t_{1}) \longrightarrow \mathbb{R}\times_{f}\mathbb{R}^n\\
       &u \xmapsto{\hspace{0.5cm}}(\alpha(u),0,\dots,0,\beta(u)).
\end{align*}
Rotating this curve around the $t$-axis we obtain a \textit{rotational hypersurface} in $\mathbb{R}\times_{f}\mathbb{R}^{n}$. Now, in order to obtain a parametrization of a rotational hypersurface, consider the unit sphere $\mathbb{S}^{n-1}\subset\mathbb{R}^{n}=\textup{span}\{\eta_{2},\dots, \eta_{n+1}\}$ with orthogonal parametrization given by
\begin{align*}
&X_{1}=\cos v_{1},\quad X_{2}=\sin v_{1}\cos v_{2},\quad X_{3}=\sin v_{1}\sin v_{2}\cos v_{3}, \quad \dots\quad\\ 
&X_{n-1}=\sin v_{1}\sin v_{2}.\dots\sin v_{n-2}\cos v_{n-1}, X_{n}=\sin v_{1}\sin v_{2}\dots\sin v_{n-2}\sin v_{n-1}.
\end{align*}
Therefore, a parametrization of a rotational hypersurface $\Sigma^n$ with radial axis $\eta_{1}$ into $\mathbb{R}\times_{f}\mathbb{R}^n$ is give by
\begin{equation}\label{para}
\begin{split}
  \psi \colon &(t_{0},t_{1})\times (0,2\pi)^{n-1} \rightarrow \mathbb{R}\times_{f}\mathbb{R}^n\\[1ex]
  &(u,v_{1},\dots, v_{n-1}) \xmapsto{\hspace{0.3cm}} \alpha(u)\eta_{1}+\beta(u)X(v_{1},\dots,v_{n-1}),
  \end{split}
\end{equation}
where $$X(v_{1},\dots,v_{n-1})=(0,X_{1}(v_{1},\dots,v_{n-1}),\dots, X_{n}(v_{1},\dots,v_{n-1})).$$

In this setting, we provide the following classification.

\begin{theorem}
Let $\psi:\Sigma^{n}\rightarrow\mathbb{R}\times_{f}\mathbb{R}^{n}$ be a rotational hypersurface with constant angle $\theta\in(0,1)$. Then, up to constants, there exists a unique immersion which makes $\psi$ a rotational gradient almost Yamabe soliton which is given by
\begin{equation*}
\psi(u,v_{1},\dots, v_{n-1})=u\sqrt{1-\theta^2}\eta_{1}+\left(\frac{\theta}{1-\theta^2}\int^{u\sqrt{1-\theta^2}}\frac{ds}{f(s)}\right)X(v_{1},\dots,v_{n-1}),\quad f(t)=e^{t},
\end{equation*}
where $\eta_{1}=(1,0,\dots,0)\in\mathbb{R}^{n+1}$, $
-\infty<u<\infty$, $0< v_{1},\dots, v_{n-1}< 2\pi$ and $X$ is a sphere parametrization.
\end{theorem}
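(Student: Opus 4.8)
The plan is to reduce the soliton condition, via Lemma \ref{prop}, to a pair of ordinary differential equations for the profile functions $\alpha,\beta$ and the warping function $f$, and then to solve them. First I would record the geometry of the parametrization \eqref{para}. Writing $\partial_t$ for the unit field tangent to the base and $\partial_r=f^{-1}X$ for the unit radial direction along the fiber, a direct computation gives $\psi_u=\alpha'\partial_t+\beta' f\,\partial_r$ and $\psi_{v_i}=\beta\,\partial_{v_i}X$, so that the induced metric is $g=E\,du^2+f(\alpha)^2\beta^2\,g_{\mathbb S^{n-1}}$ with $E=(\alpha')^2+f(\alpha)^2(\beta')^2$, and the Gauss map is $N=E^{-1/2}\big(-\beta' f\,\partial_t+\alpha'\partial_r\big)$. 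In particular the angle function is $\theta=\langle N,\partial_t\rangle=-E^{-1/2}\beta' f$, and from \eqref{eq2} the height potential is $h=\pi_I\circ\psi=\alpha(u)$, with $\nabla h$ collinear with the meridian $\psi_u$ and $|\nabla h|^2=1-\theta^2$.

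Parametrizing $\gamma$ by arc length ($E\equiv1$), the hypothesis that $\theta$ is constant forces $\beta' f(\alpha)=-\theta$ and hence $(\alpha')^2=1-\theta^2$, so that $\alpha(u)=u\sqrt{1-\theta^2}$ up to an additive constant — this already pins down the first coordinate of $\psi$. Next I would compute the shape operator: by the rotational symmetry, $A$ is diagonal in the frame $\{\psi_u,\psi_{v_i}\}$ with a meridian eigenvalue $\kappa_1$ and an $(n-1)$-fold rotational eigenvalue $\kappa_2$. Using O'Neill's connection formulas for the warped product $\mathbb R\times_f\mathbb R^n$ (Proposition 7.35 of \cite{o1983semi}) to evaluate $\langle\overline{\nabla}_{\psi_u}\psi_u,N\rangle$ and $\langle\overline{\nabla}_{\psi_{v_1}}\psi_{v_1},N\rangle$, one obtains
\[
\kappa_1=-\theta\,\frac{f'(\alpha)}{f(\alpha)},\qquad \kappa_2=-\frac{\sqrt{1-\theta^2}}{\beta\,f(\alpha)}-\theta\,\frac{f'(\alpha)}{f(\alpha)}.
\]

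The soliton condition now enters through Lemma \ref{prop}. Since the soliton function $\lambda$ is free, equation \eqref{trace} holds for some $\lambda$ if and only if its right-hand side is pointwise proportional to $g$; in the principal frame adapted to $\nabla h$ the tensor $g-dh\otimes dh$ has the two eigenvalues $\theta^2$ (meridian) and $1$ (rotational), so matching the meridian and rotational diagonal entries of \eqref{trace} collapses the whole tensor identity to the single scalar equation
\[
\kappa_1-\kappa_2=\frac{1-\theta^2}{\theta}\,(\log f)'(h).
\]
Substituting the curvatures above turns this into $\beta=\dfrac{\theta}{\sqrt{1-\theta^2}}\,\dfrac{1}{f'(\alpha)}$. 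Differentiating this relation in $u$ and comparing with the constant-angle identity $\beta'=-\theta/f(\alpha)$ yields the autonomous ODE $f\,f''=(f')^2$ for the warping function, i.e. $(\log f)'$ is constant; hence $f(t)=A\,e^{ct}$, which up to constants is $f(t)=e^t$ and exhibits the ambient as the hyperbolic space. Finally, integrating $\beta'=-\theta/f(\alpha)$ (equivalently, evaluating $\beta=\theta/(\sqrt{1-\theta^2}\,f'(\alpha))$) produces the stated profile $\beta(u)$ as a primitive of $1/f$, and the freedom in the additive constant for $\alpha$ and the integration constant for $\beta$ accounts for the ``up to constants'' uniqueness.

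The main obstacle I anticipate is the curvature computation: correctly carrying the warping factors through O'Neill's formulas when differentiating the mixed base/fiber field $\psi_u$ and the purely vertical fields $\psi_{v_i}$, so that the $\beta$-dependence in $\kappa_2$ and the cancellations producing $\kappa_1=-\theta\,(\log f)'(h)$ come out right. Once the two principal curvatures are in hand, the reduction via Lemma \ref{prop} and the recognition that compatibility of the soliton relation with the constant-angle relation forces $f\,f''=(f')^2$ are comparatively routine.
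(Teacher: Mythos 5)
Your proposal is correct and follows essentially the same route as the paper: parametrize the rotational hypersurface, use the constant-angle and arc-length conditions to fix $\alpha$ and $\beta'=\pm\theta/f(\alpha)$, compute the two principal curvatures, and feed them into Lemma \ref{prop}, whose validity for some $\lambda$ is exactly the proportionality of the right-hand side to $g$ in the principal frame. The only differences are cosmetic: you orient $N$ oppositely (hence the sign of $\beta$), and you extract $f f''=(f')^{2}$ by differentiating $\beta=\mathrm{const}/f'(\alpha)$, where the paper instead shows that $\sigma=f(\alpha)\beta$ is constant and then that $(\log f)'$ is constant --- both being equivalent manipulations of the same scalar constraint \eqref{sss}.
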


\begin{proof} Since $\psi:\Sigma^{n}\rightarrow\mathbb{R}\times_{f}\mathbb{R}^{n}$ is a rotational hypersurface, we deduce from \eqref{para} that 
\begin{equation}\label{tangente}
\begin{split}
    \psi_{u}&=\alpha'(u)\eta_{1}+\beta'(u)X,\\[1.0ex] \psi_{v_{i}}&=\beta(u)X_{v_{i}},\quad 1\leq i\leq n-1,
    \end{split}
\end{equation}
and then, the first fundamental form of $\Sigma^n$ takes the form


\begin{align}\label{I}
  I &=\begin{bmatrix}
    \quad 1 & 0 & \dots & 0\\[0.6em]
    \quad 0 & f(\alpha(u))^{2}\beta(u)^2 & \dots & 0 \\[0.6em]
    \quad \vdots & \vdots & \ddots & \vdots \\[0.6em]
    \quad 0 & 0 & \dots & f(\alpha(u))^{2}\beta(u)^{2}
  \end{bmatrix}.
\end{align}
The first fundamental equation \eqref{I} reveals that the induced metric on $\Sigma^n$ can be expressed by the warped product metric $g=du^2+\sigma(u)^{2}dv^2$ where $\sigma(u)=f(\alpha(u))\beta(u)$. In this case, it follows from the Levi-Civita connection on the warped product metric that:
\begin{equation}\label{xxx}
\begin{split}
&\nabla_{\psi_{u}}\psi_{u}=0,\\[1.5ex] &\nabla_{\psi_{u}}\psi_{v_{i}}=\nabla_{\psi_{v_{i}}}\psi_{u}=\frac{\sigma_{u}}{\sigma}\psi_{v_{i}},\\[1.5ex] &\nabla_{\psi_{v_{i}}}\psi_{v_{j}}=\psi_{v_{i} v_{j}}-\sigma\sigma_{u}\delta_{ij}\psi_{u}.
\end{split}
\end{equation}

From the tangent components \eqref{tangente}, we easily derive the following unit normal vector field for $\Sigma^n$
\begin{equation*}
    N=f(\alpha(u))\beta'(u)\eta_{1}-\frac{\alpha'(u)}{f(\alpha(u))}X(v_{1},\dots,v_{n-1}).
\end{equation*}
Hence, the hypersurface $\Sigma^n$ determines a constant angle hypersurface with constant angle $\theta$ if, and
only if,
\begin{equation}\label{unit2}\theta=\langle \partial_{t},N\rangle=f(\alpha(u))\beta'(u)=\text{constant}.
\end{equation}
Combining the unit condition for the rotational curve $\gamma(u)=(\alpha(u),0,\dots,0,\beta(u))$, i.e.,
\begin{equation*}
\alpha'(u)^{2}+f(\alpha(u))^{2}\beta'(u)^{2}=1,
\end{equation*}
and \eqref{unit2}, we deduce $\alpha'(u)=\sqrt{1-\theta^2}$, whose general solution is given by 
\begin{equation}\label{a}
\alpha(u)=u\sqrt{1-\theta^2}+c_{1},\quad c_{1}\in\mathbb{R}.
\end{equation}
Then, replacing equation \eqref{a} into \eqref{unit2} and solving in $u$, we derive the following expression
\begin{equation}\label{b}
    \beta(u)=\int^{u}\frac{\theta}{f(s\sqrt{1-\theta^2}+c_{1})}ds+c_{2}=\frac{\theta}{\sqrt{1-\theta^2}}\int^{u\sqrt{1-\theta^2}+c_{1}}\frac{ds}{f(s)}+c_{2},\quad c_{2}\in\mathbb{R}.
\end{equation}
Therefore, the rotational hypersurface takes the following form
\begin{equation}\label{sa}
\begin{split}
\psi=(u\sqrt{1-\theta^2}+c_{1})\eta_{1}+\left(\frac{\theta}{\sqrt{1-\theta^2}}\int^{u\sqrt{1-\theta^2}+c_{1}}\frac{ds}{f(s)}+c_{2}\right)X(v_{1},\dots,v_{n-1}).
\end{split}
\end{equation}

Now, in order to compute the Weingarten operator $A_{N}$, let us consider the following decomposition
\begin{equation}\label{deriva}\partial_{t}=\sqrt{1-\theta^2}\psi_{u}+\theta N.
\end{equation}
Taking the covariant derivative of \eqref{deriva} with respect $\psi_{v_{i}}$ and considering that the angle $\theta$ is constant, as well as the properties of the Levi-Civita connection of $\mathbb{R}\times_{f}\mathbb{R}^{n}$ (Proposition 7.35 in \cite{o1983semi}), we deduce that
\begin{equation}\label{xx}
\nabla_{\psi_{v_{i}}}\psi_{u}=\frac{\theta}{\sqrt{1-\theta^2}} A_{N}\psi_{v_{i}}+\frac{1}{\sqrt{1-\theta^2}}\frac{f'(\alpha(u))}{f(\alpha(u))}\psi_{v_{i}},\qquad \forall i\in\{1,\dots, n-1\}.
\end{equation}
Combining \eqref{xxx} and \eqref{xx}, yields

\begin{equation}\label{sigma}
    \sqrt{1-\theta^2}\frac{\sigma_{u}}{\sigma}\psi_{v_{i}}=\theta A_{N}\psi_{v_{i}}+\frac{f'(\alpha(u))}{f(\alpha(u))}\psi_{v_{i}},
\end{equation}
and therefore, from the expression of $\sigma$, we obtain that $\psi_{v_{i}}$ is an eigenvector for $A_{N}$ and satisfies

\begin{equation}\label{weing2}
   A_{N}\psi_{v_{i}}= \left(\frac{\sqrt{1-\theta^2}}{\sigma}-\frac{f'(\alpha(u))}{f(\alpha(u))}\theta\right)\psi_{v_{i}}.
\end{equation}
On the other hand, taking the covariant derivative of \eqref{deriva} with respect $X\in\mathfrak{X}(\Sigma)$ and using the Gauss-Weingarten formulas \eqref{eq1}, we deduce the following implications
\begin{equation*}
    \begin{split}
        \overline{\nabla}_{X}\partial_{t}&=\sqrt{1-\theta^2}\overline{\nabla}_{X}\psi_{u}+\theta \overline{\nabla}_{X}N\\
        &=\sqrt{1-\theta^2}\nabla_{X}\psi_{u}+\sqrt{1-\theta^2}g(A_{N}\psi_{u},X)N-\theta A_{N}X,
    \end{split}
\end{equation*}
and, again from the proprieties of the Levi-Civita connection of $\mathbb{R}\times_{f}\mathbb{R}^{n}$ \cite{o1983semi}, it follows
\begin{equation}\label{comparison}
\frac{f'(\alpha(u))}{f(\alpha(u))}\left(X-\sqrt{1-\theta^2}g(X,\psi_{u})\partial_{t}\right)=\sqrt{1-\theta^2}\nabla_{X}\psi_{u}+\sqrt{1-\theta^2}g(A_{N}\psi_{u},X)N-\theta A_{N}X.
\end{equation}
Comparing the tangent and the normal parts of \eqref{comparison}, one gets that $\psi_{u}$ is an eigenvector for $A_{N}$ and satisfies
\begin{equation}\label{weing1}
    A_{N}\psi_{u}=-\frac{f'(\alpha(u))}{f(\alpha(u))}\theta \psi_{u}.
\end{equation}
Therefore, from \eqref{weing2} and \eqref{weing1}, we conclude that $\{\psi_{u}, \psi_{v_{1}},\dots, \psi_{v_{n-1}}\}$ form an orthogonal basis of $A_{N}$ and its expression on that basis takes the form
\begin{align}\label{Wei}
  A_{N} &=\begin{bmatrix}
    -\dfrac{f'(\alpha(u))}{f(\alpha(u))}\theta & 0 & \dots & 0\\
    0 & \dfrac{\sqrt{1-\theta^2}}{\sigma}-\dfrac{f'(\alpha(u))}{f(\alpha(u))}\theta & \dots & 0 \\
    \vdots & \vdots & \ddots & \vdots \\
     0 & 0 & \dots & \dfrac{\sqrt{1-\theta^2}}{\sigma}-\dfrac{f'(\alpha(u))}{f(\alpha(u))}\theta
  \end{bmatrix}.
\end{align}

Now, since we are suppose that $(\Sigma^{n},g,h,\lambda)$ is a gradient almost Yamabe soliton, we obtain from Lemma \ref{prop} that 
\begin{equation}\label{sse}
    (scal_{g}-\lambda)g(X,Y)=\frac{f'(h)}{f(h)}\left[g(X,Y)-dh\otimes dh(X,Y)\right]+\theta g(AX,Y),\qquad \forall X,Y
    \in \mathfrak{X}(\Sigma).
\end{equation}

Notice that, in particular cases $X=\psi_{u}$, $Y=\psi_{v_{i}}$ and $X=\psi_{v_{i}}$, $Y=\psi_{v_{j}}$, $i\neq j$, the orthogonality of $X$, $Y$ and the expression for the height function 
\begin{equation}\label{height}h(u,v_{1},\dots,v_{n})=(\pi_{\mathbb{R}})|_{\Sigma^n}(u,v_{1},\dots,v_{n})=u\sqrt{1-\theta^2}+c_{1},\quad c_{1}\in\mathbb{R},
\end{equation}
implies that equation \eqref{sse} is trivially satisfied. Hence, we need to look at equation \eqref{sse} for a pair of fields $X=Y=\psi_{u}$ and $X=Y=\psi_{v_i}$.

For $X=Y=\psi_{u}$, we obtain
\begin{equation*}
\begin{split}
    (scal_{g}-\lambda)g(\psi_{u},\psi_{u})&=\frac{f'(h)}{f(h)}\left[g(\psi_{u},\psi_{u})-dh\otimes dh(\psi_{u},\psi_{u})\right]+\theta g(A_{N}\psi_{u},\psi_{u})\\
    &=\frac{f'(h)}{f(h)}\left[1-(1-\theta^2)\right]-\frac{f'(h)}{f(h)}\theta^2\\
    &=0.
    \end{split}
\end{equation*}
which implies that $scal_{g}=\lambda$.

Now, for $X=Y=\psi_{v_{i}}$, with $1\leq i\leq n-1$, we get
\begin{equation*}
\begin{split}
    (scal_{g}-\lambda)g(\psi_{v_{i}},\psi_{v_{i}})&=\frac{f'(h)}{f(h)}\left[g(\psi_{v_{i}},\psi_{v_{i}})-dh\otimes dh(\psi_{v_{i}},\psi_{v_{i}})\right]+\theta g(A_{N}\psi_{v_{i}},\psi_{v_{i}})\\
    &=\left(\frac{f'(h)}{f(h)}(1-\theta^2)+\frac{\theta\sqrt{1-\theta^2}}{\sigma}\right)g(\psi_{v_{i}},\psi_{v_{i}}).
    \end{split}
\end{equation*}
Hence, since $scal_{g}=\lambda$, we obtain from above that
\begin{equation}\label{sss}
  \frac{f'(h)}{f(h)}(1-\theta^2)+\frac{\theta\sqrt{1-\theta^2}}{\sigma}=0,
\end{equation}
and then, taking into account equation \eqref{sigma} and \eqref{sss}, it easily follows

\begin{equation}
\begin{split}
    \frac{\sigma_{u}}{\sigma}\psi_{v_{i}}&=\frac{1}{\sqrt{1-\theta^2}}\left[\theta A_{N}\psi_{v_{i}}+\frac{f'(h)}{f(h)}\psi_{v_{i}}\right]=0,
\end{split}
\end{equation}
which implies that $\sigma$ is constant. Therefore, from 
\begin{equation*}
  \frac{f'(h)}{f(h)}(1-\theta^2)+\frac{\theta\sqrt{1-\theta^2}}{\sigma}=0,
\end{equation*}
we deduce that 
\begin{equation*}
    \frac{f'(h)}{f(h)}=\textup{constant}.
\end{equation*}
And thus, $f(t)=c_{3}e^{c_{5}t}$, $c_{3}$, $c_{4}\in \mathbb{R}$.
Bringing together equations \eqref{sa}, \eqref{height} and the expression for $f$ we obtain the desired result.
\end{proof}

\bibliographystyle{abbrv}

\begin{thebibliography}{10}

\bibitem{alias2007constant}
L.~J. Al{\'\i}as and M.~Dajczer.
\newblock Constant mean curvature hypersurfaces in warped product spaces.
\newblock {\em Proceedings of the Edinburgh Mathematical Society},
  50(3):511--526, 2007.

\bibitem{aquino2017characterizations}
C.~Aquino, H.~de~Lima, and J.~Gomes.
\newblock Characterizations of immersed gradient almost ricci solitons.
\newblock {\em Pacific Journal of Mathematics}, 288(2):289--305, 2017.

\bibitem{barbosa2013conformal}
E.~Barbosa and E.~Ribeiro.
\newblock On conformal solutions of the yamabe flow.
\newblock {\em Archiv der Mathematik}, 101(1):79--89, 2013.

\bibitem{barros2011immersion}
A.~Barros, J.~Gomes, and E.~Ribeiro~Jr.
\newblock Immersion of almost ricci solitons into a riemannian manifold.
\newblock {\em Math. Cont}, 40:91--102, 2011.

\bibitem{barros2014characterizations}
A.~Barros and E.~Ribeiro.
\newblock Characterizations and integral formulae for generalized
  m-quasi-einstein metrics.
\newblock {\em Bulletin of the Brazilian Mathematical Society, New Series},
  45(2):325--341, 2014.

\bibitem{bourguignon1987scalar}
J.-P. Bourguignon and J.-P. Ezin.
\newblock Scalar curvature functions in a conformal class of metrics and
  conformal transformations.
\newblock {\em Transactions of the American Mathematical Society},
  301(2):723--736, 1987.

\bibitem{caminha2009complete}
A.~Caminha, H.~F. de~Lima, et~al.
\newblock Complete vertical graphs with constant mean curvature in
  semi-riemannian warped products.
\newblock {\em Bulletin of the Belgian Mathematical Society-Simon Stevin},
  16(1):91--105, 2009.

\bibitem{catino2012global}
G.~Catino, C.~Mantegazza, and L.~Mazzieri.
\newblock On the global structure of conformal gradient solitons with
  nonnegative ricci tensor.
\newblock {\em Communications in Contemporary Mathematics}, 14(06):1250045,
  2012.

\bibitem{chen2014classification}
B.-Y. Chen and S.~Deshmukh.
\newblock Classification of ricci solitons on euclidean hypersurfaces.
\newblock {\em International Journal of Mathematics}, 25(11):1450104, 2014.

\bibitem{chen2018yamabe}
B.-Y. Chen and S.~Deshmukh.
\newblock Yamabe and quasi-yamabe solitons on euclidean submanifolds.
\newblock {\em Mediterranean Journal of Mathematics}, 15(5):194, 2018.

\bibitem{colares2012some}
A.~G. Colares and H.~F. De~Lima.
\newblock Some rigidity theorems in semi-riemannian warped products.
\newblock {\em Kodai Mathematical Journal}, 35(2):268--282, 2012.

\bibitem{cunha2018r}
A.~W. Cunha, E.~L. de~Lima, and H.~F. de~Lima.
\newblock r-almost newton--ricci solitons immersed into a riemannian manifold.
\newblock {\em Journal of Mathematical Analysis and Applications},
  464(1):546--556, 2018.

\bibitem{daskalopoulos2013classification}
P.~Daskalopoulos and N.~Sesum.
\newblock The classification of locally conformally flat yamabe solitons.
\newblock {\em Advances in Mathematics}, 240:346--369, 2013.

\bibitem{de2019characterizations}
E.~L. de~Lima and H.~F. de~Lima.
\newblock Characterizations of minimal hypersurfaces immersed in certain warped
  products.
\newblock {\em Extracta mathematicae}, 34(1):123--134, 2019.

\bibitem{dillen2011classification}
F.~Dillen, M.~I. Munteanu, J.~Van~der Veken, and L.~Vrancken.
\newblock Classification of constant angle surfaces in a warped product.
\newblock {\em Balkan Journal of Geometry and Its Applications}, 16(2):35--47,
  2011.

\bibitem{do2012rotation}
M.~do~Carmo and M.~Dajczer.
\newblock Rotation hypersurfaces in spaces of constant curvature.
\newblock In {\em Manfredo P. do Carmo--Selected Papers}, pages 195--219.
  Springer, 2012.

\bibitem{gilbarg2015elliptic}
D.~Gilbarg and N.~S. Trudinger.
\newblock {\em Elliptic partial differential equations of second order}.
\newblock springer, 2015.

\bibitem{hamilton1988ricci}
R.~S. Hamilton.
\newblock The ricci flow on surfaces.
\newblock In {\em Mathematics and general relativity, Proceedings of the
  AMS-IMS-SIAM Joint Summer Research Conference in the Mathematical Sciences on
  Mathematics in General Relativity, Univ. of California, Santa Cruz,
  California, 1986}, pages 237--262. Amer. Math. Soc., 1988.

\bibitem{hsu2012note}
S.-Y. Hsu.
\newblock A note on compact gradient yamabe solitons.
\newblock {\em Journal of Mathematical Analysis and Applications},
  388(2):725--726, 2012.

\bibitem{o1983semi}
B.~O'neill.
\newblock {\em Semi-Riemannian geometry with applications to relativity},
  volume 103.
\newblock Academic press, 1983.

\bibitem{seko2019classification}
T.~Seko and S.~Maeta.
\newblock Classification of almost yamabe solitons in euclidean spaces.
\newblock {\em Journal of Geometry and Physics}, 136:97--103, 2019.

\bibitem{tokura2018warped}
W.~Tokura, L.~Adriano, R.~Pina, and M.~Barboza.
\newblock On warped product gradient yamabe solitons.
\newblock {\em Journal of Mathematical Analysis and Applications}, 473(1):201
  -- 214, 2019.

\bibitem{yamabe1960deformation}
H.~Yamabe.
\newblock On a deformation of riemannian structures on compact manifolds.
\newblock {\em Osaka Mathematical Journal}, 12(1):21--37, 1960.

\bibitem{yau1976some}
S.-T. Yau.
\newblock Some function-theoretic properties of complete riemannian manifold
  and their applications to geometry.
\newblock {\em Indiana University Mathematics Journal}, 25(7):659--670, 1976.
\end{thebibliography}

\end{document}